\documentclass[10pt]{IEEEtran}

\usepackage{natbib}        % required for bibliography

\usepackage{multirow}
\usepackage{blkarray}
\usepackage{array}
\usepackage[symbol]{footmisc} 

\usepackage{amsmath,amssymb,amsfonts}
\usepackage{algorithmic}
\usepackage{graphicx}
\usepackage{textcomp}
\usepackage{xcolor}
\usepackage{makecell}

\usepackage{mathtools} % table rows
\usepackage{multirow}% table rows

\usepackage{lipsum}

\newtheorem{theorem}{\textbf{Theorem}}
\newtheorem{corollary}{\textbf{Corollary}}

\newtheorem{lem}{\textbf{Lemma}}

\newtheorem{assumption}{\textbf{Assumption}}
\newtheorem{remark}{\textbf{Remark}}

\usepackage{epstopdf}

\def\BibTeX{{\rm B\kern-.05em{\sc i\kern-.025em b}\kern-.08em
    T\kern-.1667em\lower.7ex\hbox{E}\kern-.125emX}}

\newcommand\blfootnote[1]{%
\begingroup
\renewcommand\thefootnote{}\footnote{#1}%
\addtocounter{footnote}{-1}%
\endgroup
}

\begin{document}

%%%%%%%%%%%%%%%%%%%%%%%%%%%%%%%%%%%%%%%%%%%%%%%%%%%%%%%%%%%%%%%%%%%%%%%%%%%%%%%%%%%%%%%%%%%%%%%%%%%%%%%%%%%%%%%%%%%%%%%%%%%%%%%%%%%%%%%%%%%%%%%%%%%%%%%%%%%%%%%%%%%%%%%%%%%%%%%%%%%%%%%%%%%%

%\begin{frontmatter}
\title{%Discrete-time
%Unbiased
Extremum Seeking of Static Maps in the Presence of  Unknown Large Time-Varying Delays}

\author{ Adam Jbara$^{a}$, Emilia Fridman$^{a}$ and Xuefei Yang$^{b,\dagger}$  }

%%%%%%%%%%%%%%%%%%%%%%%%%%%%%%%%%%%%%%%%

\date{Dated: \today}
\maketitle

%%%%%%%%%%%%%%%%%%%%%%%%%%%%%%%%%%%%%%%%
%\author{ Adam Jbara, Xuefei Yang,  Emilia Fridman ,
%\IEEEmembership{Fellow, IEEE}
%
%\thanks{This work was supported by Israel Science Foundation (grant no. 446/24), Chana and Heinrich Manderman Chair on System Control at Tel Aviv University.}
%\thanks{A. Jbara and E. Fridman are with the School of Electrical Engineering, Tel-Aviv University, Tel-Aviv, Israel (e-mail:  adamjbara@mail.tau.ac.il, emilia@tauex.tau.ac.il).} }

%\author[First]{Adam Jbara}
%\author[First]{Emilia Fridman}
%\author[Second]{Xuefei Yang}

%\address[First]{School of Electrical Engineering, Tel-Aviv University, Tel-Aviv, Israel\\
% (e-mail: adamjbara@mail.tau.ac.il, emilia@tauex.tau.ac.il),}
%\address[Second]{Center for Control Theory and Guidance Technology, Harbin Institute of Technology, Harbin, China\\
% (e-mail: yangxuefei@hit.edu.cn).}

%	\tnotetext[t1]{\color{red} This work was supported by Israel Science Foundation (grant no. 446/24), Chana and Heinrich Manderman Chair on System Control at Tel Aviv University, the Fundamental Research Funds for the Central Universities (Grant No.HIT.OCEF.2023007), and the Science Center Program of National Natural Science Foundation of China (Grant 62188101).}

%%%%%%%%%%%%%%%%%%%%%%%%%%%%%%%%%%%%%%%%%%%%%%%%%%%%%%%%%%%%%%%%%%%%%%%%%%%%%%%%%%%%%%%%%%%%%%%%%%%%%%%%%%%%%%%%%%%%%%%%%%%%%%%%%%%%%%%%%%%%%%%%%%%%%%%%%%%%%%%%%%%%%%%%%%%%%%%%%%%%%%%%%%%%

\begin{abstract}
In this paper, we present the discrete-time unbiased extremum seeking (ES) algorithm for $n$-dimensional ($n$D) static quadratic maps in the presence of unknown time-varying measurement delays bounded by  known constants which can be large. The existing ES results in the presence of large delays are usually confined to known constant or slowly-varying delays, which is restrictive. We provide the first ES algorithm, which is robust with respect to {\it unknown large time-varying delays}. Moreover, we achieve the unbiased exponential convergence. We manage with such delays by choosing  dithers with  frequencies of the order of $\sqrt{\varepsilon}$, where the small parameter $\varepsilon>0$ appears in the dynamics of the real-time estimator. As expected, larger delays lead to a slower convergence. We provide qualitative and quantitative results based on the averaging analysis via delay-free transformation. For the quantitative bounds on the controller parameters that ensure the exponential unbiased convergence of the ES system, we assume that the Hessian of the map is uncertain and lies within a known range. Differently from its continuous-time counterpart, the small parameter in the discrete-time case defines the decay rate of the  estimation error system, making a quantitative bound on this parameter particularly important. We present also constructive conditions for the practical stability of the classical ES system. Our results are semi-global for globally quadratic maps, while for locally quadratic static maps, we provide a bound on the region of convergence. Our analysis shows that appropriate ES parameters can be found for any large unknown time-varying bounded delay. A numerical example highlights the  efficiency of the method.

\end{abstract}

%\end{frontmatter}

%%%%%%%%%%%%%%%%%%%%%%%%%%%%%%%%%%%%%%%%%%%%%%%%%%%%%%%%%%%%%%%%%%%%%%%%%%%%%%%%%%%%%%%%%%%%%%%%%%%%%%%%%%%%%%%%%%%%%%%%%%%%%%%%%%%%%%%%%%%%%%%%%%%%%%%%%%%%%%%%%%%%%%%%%%%%%%%%%%%%%%%%%%%%%%%%%%%%

\section{Introduction}

\blfootnote{ \color{black}{$\ast$ This work was supported by Israel Science Foundation (grant no. 446/24), Chana and Heinrich Manderman
Chair on System Control at Tel Aviv University,}}
\blfootnote{ \color{black}{
		 a - Department of Electrical Engineering, Tel-Aviv University, Israel, (A. Jbara) adamjbara@tauex.tau.ac.il, (E. Fridman ) emilia@tauex.tau.ac.il.} } 
\blfootnote{ \color{black}{		 b - Center for Control Theory and Guidance Technology, Harbin Institute of Technology, Harbin, China, (X. Yang)   yangxuefei@hit.edu.cn }}
\blfootnote{ \color{black}{		$\dagger$ -  Corresponding Author}}

Extremum Seeking (ES) is a model-free adaptive control method that optimizes an unknown nonlinear output map in real time by tuning parameters to track its optimum \cite{ariyur2003real}. The first rigorous stability analysis of ES system  using averaging theory %and singular perturbations
was presented in the continuous-time \cite{krstic2000stability} and later extended to discrete-time \cite{choi2002extremum}. Subsequently, a wide range of ES theoretical advancements and diverse practical applications have emerged, including semi-global and global ES control \cite{discrete-LQ-control-linear-extremum-seeking-Krstic,tan2006non, tan2009global}, time-varying ES control \cite{guay2015time}, and ES for time-delay and PDE systems \cite{malisoff2021multivariable, oliveira2020multivariable,  oliveira2016extremum}. For a comprehensive overview see survey \cite{SCHEINKER-survey100ES}. A significant limitation of classical ES algorithms are their inherent bias, which provides only practical stability. To overcome this critical drawback, an unbiased ES algorithms has been  introduced in \cite{yilmaz2023exponential}.

The existing results on ES with delays are confined to known  constant or slowly-varying (with the delay derivative less than $1$) delays that may be large \cite{espitia2025prescribed,malisoff2021multivariable,oliveira2020multivariable, oliveira2022extremum}, whereas time-varying delay uncertainties are small \cite{Jbara25SCLES, mazenc2024bounded,Zhang23LieAvg}. ES in the presence of {\it unknown large time-varying delays} have not been studied yet, which motivates our present paper. %Most of the existing results on ES are qualitative proving convergence for high enough frequencies of the dithers.

%All of the above are qualitative results applicable to static maps, under the condition that the dither signals are sufficiently fast. However, they do not offer any quantitative bounds for the ES controller parameters. Furthermore, existing ES methods has only considered constant and known delays till now \cite{oliveira2022extremum, oliveira2016extremum, suttner2024overcoming}.

The first constructive results for classical and bounded ES of static quadratic maps were presented in  \cite{ES-Xuefei-CDC23, yang2023time, Zhang23LieAvg, zhu2022extremum} via
 the time-delay approach to averaging \cite{fridman2020averaging11},
 providing new qualitative and quantitative  (bounds on dither frequencies under some knowledge on the maps) results. The time-delay
approach has been extended to sampled-data ES \cite{zhu2022sampled}, discrete-time  noisy measurements \cite{yang2026constructive} %, discrete-time ES \cite{yang2023robust}
and non-quadratic maps \cite{pan2024extremum}.

Recently we have suggested a constructive approach to continuous-time unbiased ES \cite{Jbara25SCLES} in the presence of  measurement delays with a known large constant part and small time-varying uncertainty via a delay-free transformation.
%In addition, our method improved quantitative results compared to existing classical ES techniques \cite{yang2023robust, zhu2022extremum}.
For bounded ES of static quadratic maps with small uncertain measurement delays, a delay-free transformation and strict Lyapunov function were employed in \cite{mazenc2024bounded}.

In this paper, we introduce the discrete-time unbiased extremum seeking (ES) algorithm for $n$-dimensional ($n$D) static quadratic maps in the presence of unknown time-varying measurement delays bounded by known constants which can be large.
We derive new qualitative and quantitative results based on the delay-free transformation introduced  in \cite{JbaraAveragingIEEE2024}.
This is the first ES algorithm, which is robust with respect to {\it unknown large time-varying delays}.
 %and achieve unbiased exponential
%convergence.
We manage with such delays by choosing the dither   frequencies of the order of $\sqrt{\varepsilon}$, where the small parameter $\varepsilon>0$ appears in the dynamics of the real-time estimator. As expected, larger delays lead to a slower unbiased exponential convergence.
For the quantitative bounds on the controller parameters, we assume that the uncertain map Hessian lies within a known range that may be large. Differently from its continuous-time counterpart, the small parameter in the discrete-time case defines  the convergence rate, making a quantitative bound on this parameter particularly important.
We present also constructive conditions for the practical stability of the classical ES system. 

Our results are semi-global for globally quadratic maps, while for locally quadratic static maps, we provide a bound on the region of convergence. Our analysis shows that appropriate ES parameters can be found for any large unknown time-varying bounded delay. A numerical example highlights the  efficiency of the method.
A preliminary conference version of the paper in the case of known constant delay was presented in \cite{jbara2025discrete}.

\textbf{Notation:}  The notation used in this paper is fairly standard. $ \mathbb{N}$ refers to the set of positive integers.  $\mathbb{R}^n$ denotes the $n$-dimensional Euclidean space with vector norm $|\cdot|$, $\mathbb{R}^{n \times m}$ is the set of all $n \times m$ real matrices with the induced matrix norm $||\cdot||$. %, $0_n$ and $I_n$ are the zero matrix and the identity matrix of order $n$, respectively.
The notation $e_i \in \mathbb{R}^n$, ($i=1,2,...,n$) denotes the column vector with a $1$ in the $i$th coordinate and $0$'s elsewhere. % {\color{blue} The notation $E_{i,j} \in \mathbb{R}^{n \times n}$, ($i,j=1,2,...,n$) denotes the matrix with a $1$ in the $(i,j)$th coordinate and $0$'s elsewhere.}
The notation $P > 0$ for $P \in \mathbb{R}^{n \times n}$ means that $P$ is symmetric and positive definite. The superscript $T$ denotes matrix transposition. For $0 < P \in \mathbb{R}^{n \times n}$ and $x \in \mathbb{R}^n$, we write $|x|^2_P = x^TPx$.

%%%%%%%%%%%%%%%%%%%%%%%%%%%%%%%%%%%%%%%%%%%%%%%%%%%%%%%%%%%%%%%%%%%%%%%%%%%%%%%%%%%%%%%%%%%%%%%%%%%%%%%%%%%%%%%%%%%%%%%%%%%%%%%%%%%%%%%%%%%%%%%%%%%%%%%%%%%%%%%%%%%%%%%%%%%%%%%%%%%%%%%%%%%%%%%%%%%%

\section{Unbiased ES with unknown  delay}

Following \cite{oliveira2022extremum, oliveira2016extremum}, consider a multi-variable quadratic map
\begin{equation} \label{Eq-Quadratic-Form}
 Q (\theta(j)) = Q^*+ \frac{1}{2}|\theta(j)-\theta^*|_H^2, \quad  j \in \mathbb{Z}_+,
\end{equation}
where $\theta(j) \in \mathbb{R}^n$ is the vector input, $\theta^* \in \mathbb{R}^n$ and
$Q^* \in \mathbb{R}$ are unknown extremum point and extremum value, and $H$ is an unknown Hessian matrix. Without loss of generality, we consider a minimum seeking with $H > 0$, where \eqref{Eq-Quadratic-Form} has a minimum value $Q(j) = Q^*$ at $\theta = \theta^*$.

The delayed measurements of the map \eqref{Eq-Quadratic-Form} are given by
\begin{equation} \label{Eq-y-Q}
y(j)=\begin{cases}
0, \quad &  0 \leq j < D_M, \\
Q(\theta(j-D(j))), \quad & \quad \,\,\,\,\,  j \geq D_M,
\end{cases}
\end{equation}
where $D(j)$ is an unknown and bounded time-varying delay
\begin{equation}\label{Eq-Delay-Cond}
 0 \le D(j) \le D_M, \quad j \geq 0,
\end{equation}
where  $D_M \in \mathbb{Z}_{+} $ is the known bound.

We propose a discrete-time unbiased ES algorithm inspired by the continuous unbiased ES algorithm \cite{yilmaz2023exponential}. By using the measurements only, our algorithm constructs an input $\theta(j)$ that exponentially converges to $\theta^*$ in the presence of large unknown time-varying delay.
For globally quadratic maps, our results are semi-global, whereas for locally quadratic static maps, we provide a bound for the region of convergence (see Remark \ref{Rem-Non-Quad}).

%\subsection{Unbiased ES in the presence of unknown time-varying bounded delays}

Define the perturbation and demodulation vector
functions for $j \geq 0$ as
\begin{equation}\label{Eq-S(t)-M(t)}
\!\!\!\!\!\!\!\!\!
\begin{array}{ll}
&  S(j)=[S_1(j),...,S_n(j)]^T, \quad S_i(j) = a_i \sin(\omega_i j),\\
&  M(j)=[M_1(j),...,M_n(j)]^T, \quad M_i(j)=\frac{2}{a_i}\sin(\omega_i j),
 \end{array}
\end{equation}
where amplitudes $a_i$ are non-zero real numbers and the frequencies have a form
\begin{equation}\label{Eq-Omega}
\begin{array}{ll}
& \omega_i=\frac{2 \pi i}{T}, \,\, i=1,...,n, \,\, \varepsilon >0, \\
& T = \max \{ n D_M  \lfloor \frac{ 1 }{ \sqrt{\varepsilon} } \rfloor, 2n+1\}.
   \end{array}
\end{equation}

\subsection{Unbiased ES Algorithm:}  Define the exponentially decaying function
\begin{equation}\label{Eq-alpha}
\alpha(j) =  \alpha_0 \bar{\lambda}^{j},  \quad  \bar{\lambda}=1-\varepsilon \lambda,   \quad j \in \mathbb{Z}_+,
\end{equation}
where $\alpha_0$ and $\lambda$ are tuning positive parameters.

Define the input $\theta(j)$ with a real-time estimate $\hat{\theta}(j)$ of $\theta^*$  as follows:
\begin{equation}  \label{Eq-hattheta-Un}
\theta(j) = \begin{cases}
  \theta (0), \qquad \qquad \qquad  \quad  0 \leq  j \leq D_{M} , \\
 \hat{\theta}(j) +  \alpha(j) S(j), \qquad \quad \,\,\,\, j > D_{M},
 \end{cases}
\end{equation}
with $S(j)$ and $\alpha(j)$ given by \eqref{Eq-S(t)-M(t)} and \eqref{Eq-alpha}, respectively.
Choose a scalar adaptation gain $k>0$ and a small tuning parameter $\varepsilon>0$. As illustrated in Fig. 1, the unbiased ES algorithm has the form
\begin{equation}\label{Eq-dervative-hattheta-Un-1}
\!\!\! \!\!\! \!\!\!
\begin{array}{ll}
&  \hat{\theta}(j+1) =    \hat{\theta}(j) - \varepsilon  \frac{kM(j)}{\alpha(j)} [y(j)-\eta(j)], \quad j \geq D_M, \\
&   \hat{\theta}(j) = \theta(0), \qquad  \qquad \qquad \qquad \qquad \,\, 0 \leq j \leq D_M,
\end{array}
\end{equation}
where the high-pass filter state $\eta(j)$ is governed  by
\begin{equation}\label{Eq-dervative-eta-Un}
\!\!
\begin{array}{ll}
& \eta(j+1) = \eta(j) + \varepsilon \omega_h [y(j) - \eta(j)],   \qquad    j \geq D_M,\\
& \eta(j)=Q_0,  \qquad \qquad \qquad \qquad \qquad \qquad \,\,    j = D_M,
\end{array}
\end{equation}
with some $\omega_h>0$.
%We will further present \eqref{Eq-dervative-hattheta-Un-1} as
%\begin{equation}\label{Eq-dervative-hattheta-Un-2}
%\!\!\!\!\!\!\!
%\begin{array}{ll}
 % \hat{\theta}(j+1)  & =   \hat{\theta}(j)   - \varepsilon  \frac{kM(j)}{\alpha(j)} [y(j)-\eta(j)] \\
%& = \hat{\theta}(j)   - \varepsilon  \frac{kM(j)}{\alpha(j)} [Q(\theta(j-D(j)))-\eta(j)] \\
%\hat{\theta}(j+1) \!\!\!\!\!\! & = \hat{\theta}(j)   - \varepsilon  \frac{kM(j)}{\alpha(j)} [Q(\theta(j))-\eta(j)] \\
%& \quad  - \varepsilon  \frac{kM(j)}{\alpha(j)} [Q(\theta(j-D(j)))-Q(\theta(j))].
%\end{array}
%\end{equation}
%for $j \geq D_M$.

\begin{figure}[h]
\includegraphics[width=7.5cm, height=6cm]{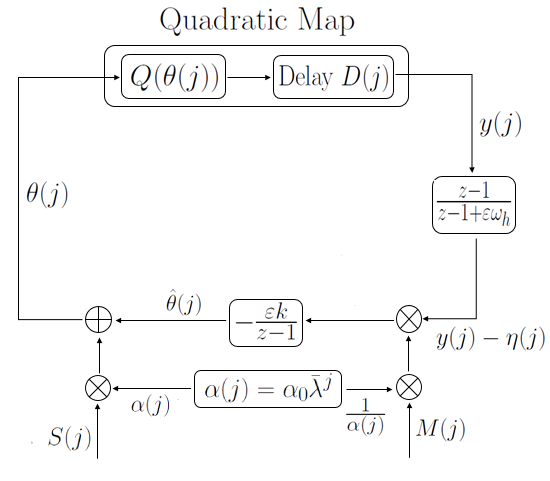}
\centering
\caption{
 Unbiased ES algorithm in the presence of time-varying measurement delay $D(j)$.
}\label{Fig-1}
\end{figure}
%Differently from the continuous-time unbiased ES algorithm, the discrete-time algorithm depends on two small tuning parameters $\varepsilon$ (the integration step) and $T$ (which eventuality determines the frequencies $\omega_i= O(\sqrt \varepsilon)$) that allows for robustness with respect to unknown time-varying delays with large $D_M$.

%In contrast to its continuous-time counterpart \cite{Jbara25SCLES, yilmaz2023exponential} with one small parameter (defining high dither frequencies),
The proposed discrete-time unbiased  ES algorithm  utilizes two small tuning parameters: the  gain $\varepsilon$ in the dynamics of $\hat \theta$ and the   dither  frequencies $\omega_i = O(\sqrt{\varepsilon})$ (that are low, but the dithers are varying faster than $\hat \theta$). The latter choice of frequencies allows to preserve the stability in the presence of large unknown time-varying delays.

%Differently from the continuous-time unbiased ES algorithm, the discrete-time algorithm depends on two key parameters: the integration step $\varepsilon$ and the period $T$.

%The latter parameter $T$ defines the dither frequencies $\omega_i = O(\sqrt{\varepsilon})$, which facilitates robustness against unknown time-varying delays with significant upper-bound of $D_M$.
%These parameters allow for dither frequencies $\omega_i$ of the order $O(\sqrt{\varepsilon})$, enabling the algorithm to handle unknown time-varying delays with significant upper-bound of $D_M$.
%Our objective is to derive constructive, quantitative conditions for semi-global exponential convergence in the presence of time-varying delays. Specifically, for an unknown time-varying bounded delay $D(j)$  and a given initial state ball for $\theta(0)$, we seek to identify appropriate exponentially decaying gains, high-pass filters, and perturbation frequencies. The results below provide simple scalar inequalities that guarantee the exponential stability of the estimation error and provide quantitative bounds on the controller parameters.

Similar to \cite{yilmaz2023exponential}, given any desirable $\lambda >0$ (associated with decay rate), we choose positive tuning parameters $k$ and $\omega_h$  that satisfy the following inequalities:
\begin{equation} \label{Ineq-Assumption-2}
k  H   > \lambda I, \quad \omega_h > 2 \lambda.
\end{equation}
Specifically, the inequalities in \eqref{Ineq-Assumption-2} dictate that the adaptation (learning) rates need to be greater than $\lambda$ associated with the perturbation (exploration) signal.  In addition, it is worth noting that while inequalities \eqref{Ineq-Assumption-2} do not dependent of $D_M$, our conditions  in
Theorem \ref{THM-1} below (see inequalities \eqref{Ineq-THM-Un-condition-1}-\eqref{Ineq-THM-Un-condition-3}) imply that a larger
$D_M$ leads  to a slower convergence (larger decay rate $\bar{\lambda}$).

\begin{remark}\label{Rem-Diff-UnB-Class-1}
As in the continuoius case \cite{Jbara25SCLES, yilmaz2023exponential}, the discrete-time unbiased ES algorithm  \eqref{Eq-dervative-hattheta-Un-1}, \eqref{Eq-dervative-eta-Un} is equipped with high-pass filter, and exponentially decaying perturbation and growing demodulation signals. Despite the exponentially growing signal in the algorithm, the high-pass filter has a crucial role in guaranteeing the unbiased convergence  by ensuring the exponential decay of $y(j)-\eta(j)$ to zero at the rate of $\bar{\lambda}^2$ (see \eqref{Ineq-y-Q*-delay} below).
\end{remark}
Define the estimation errors $\tilde{\theta}(j)$ and $\tilde{\eta}(j)$ as
\begin{equation}\label{Eq-tildetheta-tildeeta-Un}
	\begin{array}{ll}
  \tilde{\theta}(j) = \hat{\theta}(j)-{\theta}^*,\quad & j \geq 0, \\
    \tilde{\eta}(j) = \eta(j)-Q^*, \quad  & j \geq D_M.
    \end{array}
\end{equation}
Then, the estimation errors are governed by
\begin{equation}\label{Eq-dervative-tildetheta-Un-1}
\!\!\!\!\!\!\!\!
\begin{array}{ll}
&    \tilde{\theta}(j+1) = \tilde{\theta}(j)      \\
& \qquad   - \varepsilon  \frac{kM(j)}{\alpha(j)} \Big[ y(j)-Q^*-\frac{1}{2}|\tilde{\theta}(j)+\alpha(j) S(j)|_H^2 \Big]  \\
&   \,\,  - \varepsilon  \frac{kM(j)}{\alpha(j)} \left[ \frac{1}{2}|\tilde{\theta}(j)+\alpha(j) S(j)|_H^2 -\tilde{\eta}(j)\right],  \,\,\,\,  j \geq  D_M, \\
& \tilde \theta(j)=\theta(0)-\theta^*,  \qquad \qquad  0 \leq j \leq D_M,
\end{array}
\end{equation}
and
\begin{equation}\label{Eq-dervative-tildeeta-Un}
\!\!\!\!\!\!\!\!\!
\begin{array}{ll}
 & \tilde{\eta}(j+1) =  \tilde{\eta}(j)   - \varepsilon \omega_h \Big[ \tilde{\eta}(j)     - y(j) +Q^* \Big],   \,\,\,  j \geq D_M,\\
& \tilde{\eta}(j)= Q_0 - Q^*,  \qquad \qquad  \qquad\qquad \qquad  \quad \,\,    j = D_M.
\end{array}
\end{equation}
For small enough frequencies $\omega_i$ (large $T$), we have
\begin{equation}\label{Eq-approx-sin-cos-eps}
\begin{array}{ll}
\qquad \sin(\frac{2\pi i}{T}) \!\!\!\!\!\! & \approx  \frac{2\pi i}{T} =O(\sqrt{\varepsilon}), \\
1-\cos(\frac{2 \pi i}{T}) \!\!\!\!\!\!  & \approx  \left( \frac{2\pi i}{T} \right)^2  =O(\varepsilon), \quad i=1,2,...,n,
\end{array}
\end{equation}
which lead to
 \begin{equation}\label{Eq-diff-alpha-S}
 \!\!\!\!\!\!\!\!\!\!\!\!
\begin{array}{ll}
%{\color{red} \alpha(j) S(j) -  \alpha(j-1)S(j-1) } & & =  [\alpha_0 \bar{\lambda}^j] [a_i\sin(\omega_i j)] -  [\alpha_0 \bar{\lambda}^{j-1}] [a_i\sin(\omega_i (j-1))]\\
%& = a_i \alpha_0 \bar{\lambda}^{j-1} (1- \varepsilon \lambda) \sin(\frac{2 \pi i}{T} j)\\
%& \quad - a_i \alpha_0 \bar{\lambda}^{j-1}  [\sin(\frac{2 \pi i}{T} j) \cos(\frac{2 \pi i}{T}) -\cos (\frac{2 \pi i}{T} j) \sin(\frac{2 \pi i }{T}) ]  \\
%& = a_i \alpha_0 \bar{\lambda}^{j-1}   \sin(\frac{2 \pi i}{T} j) - \varepsilon \lambda  a \alpha_0 \bar{\lambda}^{j-1}    \sin(\frac{2 \pi i}{T} j)\\
%& \quad - a_i \alpha_0 \bar{\lambda}^{j-1}  [\sin(\frac{2 \pi i}{T} j) \cos(\frac{2 \pi i}{T}) -\cos (\frac{2 \pi i}{T} j) \sin(\frac{2 \pi i}{T}) ]  \\
%& = - \varepsilon  a_i \alpha_0 \lambda   \bar{\lambda}^{j-1}    \sin(\frac{2 \pi i}{T} j)  \\
%& \quad + a_i \alpha_0 \bar{\lambda}^{j-1}   \sin(\frac{2 \pi i}{T} j) - a_i \alpha_0 \bar{\lambda}^{j-1}   \sin(\frac{2 \pi i}{T} j) \cos(\frac{2 \pi i }{T})   \\
%& \quad + a_i \alpha_0 \bar{\lambda}^{j-1}   \cos(\frac{2 \pi i}{T} j)\sin(\frac{2 \pi i}{T})   \\
%&   =  -  \varepsilon   a_i \alpha_0 \lambda   \bar{\lambda}^{j-1}    \sin(\frac{2 \pi i}{T} j)     \\
%& \quad   + a_i  \alpha_0 \bar{\lambda}^{j-1}    \underbrace{[1-  \cos(\frac{2 \pi  i }{T})]}_{O(\varepsilon)}   \sin(\frac{2 \pi i}{T} j)   \\
%& \quad  + a_i \alpha_0 \bar{\lambda}^{j-1}   \underbrace{\sin(\frac{2 \pi i }{T}) }_{O(\sqrt{\varepsilon})}  \cos(\frac{2 \pi i}{T} j)\\
& \alpha(j) S_i(j) -  \alpha(j-1)S_i(j-1)\\
& \qquad  =  -  \varepsilon   a_i \alpha_0 \lambda   \bar{\lambda}^{j-1}    \sin(\frac{2 \pi i}{T} j)     \\
& \qquad \quad  + a_i \alpha_0 \bar{\lambda}^{j-1}    \underbrace{[1-  \cos(\frac{2 \pi  i }{T})]}_{O(\varepsilon)}   \sin(\frac{2 \pi i}{T} j)   \\
& \qquad \quad + a_i \alpha_0 \bar{\lambda}^{j-1}   \underbrace{\sin(\frac{2 \pi i }{T}) }_{O(\sqrt{\varepsilon})}  \cos(\frac{2 \pi i}{T} j).
\end{array}
\end{equation}
Thus, we obtain
 \begin{equation}\label{Eq-diff-Q}
  \!\!\!\!\! \!\!\!\!\!
\begin{array}{ll}
% y(j)-Q^*-\frac{1}{2}|\tilde{\theta}(j)+\alpha(j) S(j)|_H^2    & = \Big[Q^* + \frac{1}{2}|\tilde{\theta}(j)+\alpha(j-D(j)) S(j-D(j))|_H^2 \Big] -Q^*-\frac{1}{2}|\tilde{\theta}(j)+\alpha(j) S(j)|_H^2 \\
% & =   \frac{1}{2}|\tilde{\theta}(j)+\alpha(j-D(j)) S(j-D(j))|_H^2  - \frac{1}{2}|\tilde{\theta}(j)+\alpha(j) S(j)|_H^2 \\
% &= \frac{1}{2} \Big[  \tilde{\theta}(j-D(j)) + \tilde{\theta}(j)   + \alpha(j-D(j))S(j-D(j))  + \alpha(j) S(j)   \Big]^T  \times \\
%&   H \Big[ \sum_{k=j-D(j)+1}^{j}\left(     \underbrace{\tilde{\theta}(i-1) - \tilde{\theta}(i)}_{O(\varepsilon)} \right)  \\
%&  + \sum_{k=j-D(j)+1}^{j} \left(    \underbrace{ \alpha(j-1)S(j-1)  - \alpha(j)S(j)  }_{O(\sqrt{\varepsilon})}   \right) \Big]   = O(\sqrt{\varepsilon}).\\
& y(j)-Q^*-\frac{1}{2}|\tilde{\theta}(j)+\alpha(j) S(j)|_H^2   \\
& \quad   = \frac{1}{2} \Big[  \tilde{\theta}(j-D(j)) + \tilde{\theta}(j) \\
&  \qquad + \alpha(j-D(j))S(j-D(j))  + \alpha(j) S(j)   \Big]^T H  \\
&  \times \Big[ \sum_{k=j-D(j)+1}^{j}\left(     \underbrace{\tilde{\theta}(k-1) - \tilde{\theta}(k)}_{O(\varepsilon)} \right)  \\
&  + \sum_{k=j-D(j)+1}^{j} \left(    \underbrace{ \alpha(k-1)S(k-1)  - \alpha(k)S(k)  }_{O(\sqrt{\varepsilon})}   \right) \Big]  \\
&  \qquad \qquad \qquad \qquad \qquad \qquad = O(\sqrt{\varepsilon}).
\end{array}
\end{equation}
Equation \eqref{Eq-dervative-tildetheta-Un-1} can be further expressed as
\begin{equation}\label{Eq-dervative-tildetheta-Un-2}
\!\!\!\!
\begin{array}{ll}
&  \tilde{\theta}(j+1)  = \tilde{\theta}(j)          - \varepsilon  \frac{kM(j)}{\alpha(j)}  \Big[ \frac{1}{2}|\tilde{\theta}(j)+\alpha(j) S(j)|_H^2  \\
& \qquad \qquad \qquad \qquad    -\tilde{\eta}(j)\Big] + \varepsilon    \bar{\Delta}_{\sqrt{\varepsilon}}(j),  \,\,\,\,  j \geq  D_M,
\end{array}
\end{equation}
with
%$O(\sqrt{\varepsilon})$ term %(cf. \eqref{Eq-diff-Q})
%given by $\bar{\Delta}_{\sqrt{\varepsilon}}(j)$
\begin{equation}\label{Eq-Delta-eps}
\!\!\!\!
\begin{array}{ll}
& \bar{\Delta}_{\sqrt{\varepsilon}}(j)   = -     \frac{kM(j)}{\alpha(j)}   \Big[y(j) - Q^*  \\
&  \qquad \qquad    -\frac{1}{2}|\tilde{\theta}(j)+\alpha(j) S(j)|_H^2 \Big]=O(\sqrt{\varepsilon}).
 \end{array}
\end{equation}
%Also, it follows from \eqref{Eq-diff-Q} that $ \bar{\Delta}_{\sqrt{\varepsilon}}(j) = O(\sqrt{\varepsilon})$.
One can easily show that
\begin{equation}\label{Eq-integral-matrices}
\begin{array}{ll}
 & \frac{1}{T} \sum_{j=t}^{t+T-1}  M(j)  =0,\\ %= col \{ \frac{1}{\varepsilon} \int_{t}^{t+ \varepsilon} \sin(\frac{2\pi i s}{\varepsilon}) ds \} = 0, \\
 & \frac{1}{T} \sum_{j=t}^{t+T-1} M(j)S^T(j)   = I, \\
 & \frac{1}{T} \sum_{j=t}^{t+T-1}  M(j) S^T(j) HS(j)  = 0. 		
\end{array}
\end{equation}
Thus, the nominal (averaged) system corresponding to \eqref{Eq-dervative-tildetheta-Un-2} for small enough $\varepsilon$ is given by
\begin{equation}\label{Eq-Averaged-Sys}
\tilde{\theta}_{av}(j+1) = (1 - \varepsilon k H)   \tilde{\theta}_{av}(j),
\end{equation}
which is exponentially stable. Differently from the continuous-time case, the decay rate of \eqref{Eq-Averaged-Sys} depends on $\varepsilon$. Therefore, bounds on $\varepsilon$ for the estimation error system that we aim to provide are important for achieving a desirable decay rate.

\subsection{System transformation and main results}
We will perform averaging-based analysis
of \eqref{Eq-dervative-tildetheta-Un-3} via a delay-free transformation introduced in \cite{JbaraAveragingIEEE2024}.
With notations
\begin{equation} \label{Eq-a_i}
\!\!\!\!\!\!
\begin{array}{ll}
& \!\!\!\!  a_{1}(j) =   - k M(j)S^T(j) H + k H,  \ a_{2}(j) =   k M(j),    \\
& \!\!\!\! 	a_{3}(j) =  -\frac{1}{2} k M(j) S^T(j)H  S(j), \ a_{4}(j) =  -\frac{1}{2} k M(j),
\end{array}
\end{equation}
equation \eqref{Eq-dervative-tildetheta-Un-2} for $j \geq  D_M$ can be  presented as
\begin{equation}\label{Eq-dervative-tildetheta-Un-3}
\!\!\!\!\!\!\! \!\!
\begin{array}{ll}
%&  \tilde{\theta}(j+1)  = \tilde{\theta}(j)   - \varepsilon    k M(j)\left[ Q^*+   \frac{H}{2} |\tilde{\theta}(j)  + S(j)|^2  \right] +{  \bar{\Delta}_{\sqrt{\varepsilon}}(j) } \\
%& \qquad \quad \,\,\,\, =   \tilde{\theta}(j)    - \varepsilon    \frac{ k M(j)}{2}  |\tilde{\theta}(j)|^2_{ H }  - \varepsilon    \frac{ k M(j)}{2}  |S(j)|^2_{ H }   \\
%& \qquad \qquad \quad  - \varepsilon   k M(j)S^T(j) H \tilde{\theta}(j)  - \varepsilon    k M(j) Q^* +{ \bar{\Delta}_{\sqrt{\varepsilon}}(j) } \\
 & \tilde{\theta}(j+1)   =  [1-\varepsilon k H] \tilde{\theta}(j) + \varepsilon a_1(j) \tilde{\theta}(j)  \\
 & \qquad \qquad \,\,\, +  \varepsilon   a_2(j) \alpha^{-1}(j) \tilde{\eta}(j)   + \varepsilon  a_3(j)\alpha (j)   \\
 &  \qquad \qquad \,\,\, + \varepsilon   a_4(j) \alpha^{-1}(j)  |\tilde{\theta}(j)|^2_{ H } + \varepsilon    \bar{\Delta}_{\sqrt{\varepsilon}}(j),
\end{array}
\end{equation}
 with $\tilde{\theta}(j) =  \theta(0)-\theta^*$ for $0 \leq j \leq D_M$.

Define the functions
\begin{equation} \label{Eq-rho}
  \rho_{l}(j) := -\frac{\varepsilon}{T} \sum_{i=j}^{j+ T-1} (j+T-i)  a_l(i), \quad l=1,2,3,4.
\end{equation}
Due to \eqref{Eq-integral-matrices} and \eqref{Eq-a_i}, we have 
\begin{equation}
\begin{array}{ll}
 \rho_{l}(j+1) -\rho_{l}(j) = \varepsilon a_{l}(j), \quad j \geq 0.
\end{array}
\end{equation}
%{\color{red} Recall that $T$ depends on $\varepsilon$ ( it is $O(\frac{1}{  \sqrt{\varepsilon}  } )$).}
Since $a_l(j)$, $l=1,2,3,4$ are bounded with the bound (denoted by $\bar{a}_l$) that does not depend on $\varepsilon$, then we have
\begin{equation}\label{Eq-rho-gamma-bounds-eps}
\begin{array}{ll}
||\rho_{1}(j)||  \!\!\!\!\! & \leq   \frac{\varepsilon}{T} \sum_{i=j}^{j+ T-1} (j+T-i)\bar{a}_1  \\
&= \frac{\varepsilon \bar{a}_l}{T} \left[ \frac{T(T+1)}{2} \right]  \leq   \varepsilon \bar{a}_1 T    = O( \sqrt{\varepsilon} ),
\end{array}
\end{equation}
and $|\rho_{l}(j)| \leq \varepsilon \bar{a}_l T = O(\sqrt{\varepsilon})$, $l=2,3,4$.
%Also, we get
%\begin{equation*}
%\begin{array}{ll}
%  \rho_{l}(j+1) & = -\frac{\varepsilon}{T} \sum_{i=j+1}^{j+ T} (j+1+T-i)  a_l(i)  + \frac{\varepsilon}{T} \sum_{i=j}^{j+ T-1} (j+T-i)  a_l(i)  \\
% & = -\frac{\varepsilon}{T} \sum_{i=j+1}^{j+ T} (j+T-i)  a_l(i)  - {\color{red} \underbrace{  \frac{\varepsilon}{T} \sum_{i=j+1}^{j+ T}    a_l(i)}_{=0}} \\
% & \qquad \qquad \qquad  + \frac{\varepsilon}{T} \sum_{i=j}^{j+ T-1} (j+T-i)  a_l(i)  \\
% & = -\frac{\varepsilon}{T} \sum_{i=j+1}^{j+ T-1} (j+T-i)  a_l(i)    \\
% & \qquad \qquad \qquad  + \frac{\varepsilon}{T} \sum_{i=j+1}^{j+ T-1} (j+T-i)  a_l(i) + \frac{\varepsilon}{T}  T  a_l(j) \\
% & =   \varepsilon   a_l(j)
%  \end{array}
%\end{equation*}
%Let $\bar{\rho}_{j}$ be upper bounds of $\rho_{l}(j)$:
%\begin{equation} \label{Eq-rho-gamma-bounds}
%\begin{array}{ll}
%||\rho_{l}(j)|| \leq \sqrt{\varepsilon}   \bar{\rho}_{l}, \quad  \rho_{l}(j+1) -\rho_{l}(j) = \varepsilon a_{l}(j),
%\end{array}
%\end{equation}
%with
%{\color{red}\begin{equation}\label{Eq-bound-rho}
%\begin{array}{ll}
%& \bar{\rho}_{1} = 0.192 k H_M D_M, \\
%& \bar{\rho}_{2} = 2\bar{\rho}_{4} =0.3848  \frac{k}{a}D_M,\\
%& \bar{\rho}_{3} = 0.136 ak H_M D_M.
%\end{array}
%\end{equation}}
Define the function
\begin{equation} \label{Eq-G-delay}
 G(j) =
 \begin{cases}
  0, \quad \hspace{2.5 cm}  0 \leq j \leq D_M-1, \\
 \!\!\! \begin{array}{ll}
\rho_{1}(j)  \tilde{\theta}(j )  + \rho_{2}(j) \alpha^{-1}(j) \tilde{\eta}(j) + \rho_{3}(j) \alpha(j)  \\
     + \rho_{4}(j) \alpha^{-1}(j)  |\tilde{\theta}(j)|_{ H }^2, \quad \,\,\, j \geq D_M.
 \end{array}
 \end{cases}
\end{equation}
Note that the term $G(j)$ is $O(\sqrt{\varepsilon})$, provided $\tilde{\theta}(j)$ is $O(1)$. We have
\begin{equation}\label{Eq-derivative-G}
\!\!\!\!\!\!\!\!
\begin{array}{ll}
& G(j+1) - G(j)  =      \varepsilon a_{1}(j)  \tilde{\theta}(j)   + \varepsilon a_2(j)  \alpha^{-1}(j)  \tilde{\eta}(j)  \\
& \qquad \qquad \qquad + \varepsilon a_3(j) \alpha(j) + \varepsilon a_4(j) \alpha^{-1}(j)  |\tilde{\theta}(j)|_{ H }^2    \\
& \qquad \qquad \qquad  +  \varepsilon \rho_{1}(j+1) [\Delta(j)+ \bar{\Delta}_{\sqrt{\varepsilon}}(j) ] \\
& \qquad  + \varepsilon \omega_h  \rho_{2}(j+1) \alpha^{-1}(j)  \Big[ \frac{1}{2}|\tilde{\theta}(j-D(j))\\
& \qquad \qquad \qquad +\alpha(j-D(j)) S(j-D(j))|_H^2  - \tilde{\eta}(j) \Big] \\
&  + \rho_{2}(j+1) \alpha^{-1}(j) ( \bar{\lambda}^{-1}-1)   \Big( \tilde{\eta}(j)[1-\varepsilon \omega_h] \\
& \quad   +   \frac{ \varepsilon \omega_h}{2}|\tilde{\theta}(j-D(j)) +\alpha(j-D(j)) S(j-D(j))|_H^2   \Big) \\
&   \qquad \qquad \qquad  + \rho_{3}(j+1) \alpha(j) [\bar{\lambda}-1]    \\
&\qquad  \quad + 2\varepsilon \rho_{4}(j+1)    \alpha^{-1}(j)  \tilde{\theta}^T(j) H [\Delta(j)+ \bar{\Delta}_{\sqrt{\varepsilon}}(j) ] \\
& \qquad \qquad  + \varepsilon^2 \rho_{4}(j+1) \alpha^{-1}(j)   |   \Delta(j)+ \bar{\Delta}_{\sqrt{\varepsilon}}(j) |_{ H }^2 \\
& \qquad \qquad  + \rho_{4}(j+1) \alpha^{-1}(j) (\bar{\lambda}^{-1}-1) \\
& \qquad \qquad  \qquad\qquad \times |\tilde{\theta}(j) + \varepsilon (\Delta(j)+ \bar{\Delta}_{\sqrt{\varepsilon}}(j)  )|_{ H }^2,
   \end{array}
\end{equation}
for $j \geq D_M$ with
\begin{equation}\label{Eq-Delta}
\!\!\!\!\! \!\!\!\!\!\!\!\!\!
\begin{array}{ll}
  & \Delta(j) =  [a_1(j)-   kH]   \tilde{\theta}(j)    +  a_2(j)  \alpha^{-1}(j)  \tilde{\eta}(j) \\
& +   a_3(j) \alpha(j) +   a_4(j) \alpha^{-1}(j)  |\tilde{\theta}(j)|_{ H }^2, \quad   j \geq D_M.
  \end{array}
\end{equation}
Consider the following transformation
\begin{equation} \label{Eq-z-delay}
	z(j) =  \tilde{\theta}(j) - G(j), \quad   j \geq D_M.
\end{equation}
By employing \eqref{Eq-dervative-tildetheta-Un-3} and \eqref{Eq-z-delay}, we obtain
\begin{equation}\label{Eq-dervative-z-Y-delay}
\!\!\!\!\!\!\!\!\!
\begin{array}{ll}
  & {z}(j+1) - z(j)=  - \varepsilon kH z(j) + \underbrace{Y(j)}_{O(\varepsilon^{\frac{3}{2}})}, \quad j \geq D_M,\\
%&   |z(j)| = |\tilde{\theta}(0)| \leq \sigma_0, \qquad \qquad     0 \leq  j \leq D_M,\\
&  z(D_M)  = \tilde{\theta}(D_M) - G(D_M),
   \end{array}
\end{equation}
where
\begin{equation}\label{Eq-Y-delay}
\!\!\!\!\!\!\!\!\!
\begin{array}{ll}
  &{Y(j)} =  -\varepsilon kH G(j) -  \varepsilon   \rho_{1}(j+1) (\Delta(j)+ \bar{\Delta}_{\sqrt{\varepsilon}}(j) ) \\
& - \varepsilon \omega_h  \rho_{2}(j+1) \alpha^{-1}(j)  \Big[ \frac{1}{2}|\tilde{\theta}(j-D(j))\\
& \qquad \qquad \quad +\alpha(j-D(j)) S(j-D(j))|_H^2  - \tilde{\eta}(j) \Big] \\
&  - \rho_{2}(j+1) \alpha^{-1}(j) ( \bar{\lambda}^{-1}-1)   \Big( \tilde{\eta}(j)[1-\varepsilon \omega_h] \\
& \quad   +   \frac{ \varepsilon \omega_h}{2}|\tilde{\theta}(j-D(j)) +\alpha(j-D(j)) S(j-D(j))|_H^2   \Big) \\
& \qquad   - \rho_{3}(j+1) \alpha(j)  [\bar{\lambda}-1] + \varepsilon   \bar{\Delta}_{\sqrt{\varepsilon}}(j)    \\
& - 2\varepsilon   \rho_{4}(j+1)    \alpha^{-1}(j)  \tilde{\theta}^T(j) H [\Delta(j)+ \bar{\Delta}_{\sqrt{\varepsilon}}(j) ] \\
&  - \varepsilon^2  \rho_{4}(j+1)    \alpha^{-1}(j)   |   \Delta(j)+ \bar{\Delta}_{\sqrt{\varepsilon}}(j)   |_{ H }^2 \\
&  -   \rho_{4}(j+1)   \alpha^{-1}(j)  (\bar{\lambda}^{-1}-1) \\
& \qquad   \times|\tilde{\theta}(j) + \varepsilon (\Delta(j)+ \bar{\Delta}_{\sqrt{\varepsilon}}(j)  )|_{ H }^2, \quad j \geq D_M.
\end{array}
\end{equation}
Since $G(j)$ is $O(\sqrt{\varepsilon})$, it follows that $Y(j)$ is $O(\varepsilon^{\frac{3}{2}})$, provided $\tilde{\theta}(j)$ is $O(1)$. %The terms $G(j)$ and $Y(j)$ are of the order of $O(\sqrt{\varepsilon})$ and $O(\sqrt{\varepsilon}^3)$ respectively, provided $\tilde{\theta}(j)$ is of the order of $O(1)$.

%{\color{red}
%The following proposition establishes a qualitative stability result through averaging analysis:
%\begin{proposition}\label{Prop-Qualitative}
%Given any delay bound $D_M$, and parameters $a_i$, $k$, $\lambda$, $\omega_h$ satisfying \eqref{Ineq-Assumption-2}, there exist sufficiently small tuning parameters   $\varepsilon$ and $\omega_i=O(\sqrt{\varepsilon})$, $i=1,...,n$ such that the closed-loop system in (\ref{Eq-dervative-hattheta-Un-1}) and (\ref{Eq-dervative-eta-Un}) is exponentially stable at the origin. The latter means that the input $\theta(j)$ and output $y(j)$ exponentially converge to $\theta^*$ and $Q^*$, respectively.
%\end{proposition}
%{\bf Proof:}
%The proof of Proposition \ref{Prop-Qualitative} is omitted as it is analogous to that of Theorem \ref{THM-1}.
%$\hfill \Box$
%}

While $H$ can be accurately estimated when delays are constant and known by using results of \cite{ghaffari2012multivariable} (see Remark 1 in \cite{Jbara25SCLES}), the presence of unknown time-varying delays makes obtaining a reliable estimation mathematically intractable. For quantitative results, we assume that uncertain $H$ lies within a known range:

%To derive rigorous quantitative results, it is necessary to assume that the unknown Hessian $H$ belongs to a known compact set. Although $H$ can be accurately estimated when delays are constant and known \cite{ghaffari2012multivariable}, the presence of unknown time-varying delays makes obtaining a reliable estimate mathematically intractable. Consequently, to facilitate the derivation of explicit stability bounds, we assume that $H$ is bounded as follows:

%Similar to the approach in \cite{ghaffari2012multivariable}, estimating the Hessian from the measurements $y(j)$ appears to be possible provided delay is constant and known \cite{JbaraESCDC25}. However, due to the presence of an unknown time-varying bounded delay, an efficient estimate of $H$ seems to be not possible. Note that all the existing results for known constant delays use such an estimate. Therefore, to have efficient quantitative bounds on the system parameters, we assume that uncertain $H$ lies within a known range:

\begin{assumption}\label{Assumption-Hessian}
The Hessian $H$ is uncertain and subject to $H_m I \leq H \leq H_M I $, where $H_m$ and $H_M$ are  known positive scalars.
\end{assumption}
%From Assumption \ref{Assumption-Hessian}, The following Lemma provides accurate upper-bounds on each function in \eqref{Eq-rho}:
Under Assumption \ref{Assumption-Hessian}, the following bounds are derived in the Appendix:
\begin{lem}\label{Lem-Upper-Bounds}
The functions $\rho_{i}(j)$ ($i=1,2,3,4$) defined by \eqref{Eq-rho} are bounded as follows for $t\ge 0$:
\begin{equation} \label{Eq-rho-gamma-bounds}
\!\!\!\!\!\!\!\!\!
\begin{array}{ll}
&||\rho_{1}(j)|| \leq  \bar{\rho}_{1}\sqrt{\varepsilon} := 0.19245   n D_M  k H_M \times  \\
 & \qquad \qquad  \qquad    \Big[ \sum_{i=1 }^n   \frac{1 }{ i } +   \sum_{i \neq l}^n  \frac{a_l}{a_i}  [   \frac{ 1 }{ |i-l| }   +   \frac{ 1 }{ i+l }  ]  \Big]  \sqrt{\varepsilon},\\
 &|\rho_{2}(j)| \leq   \bar{\rho}_{2} \sqrt{\varepsilon} :=   0.3849   n D_M  k \left[ \sum_{i=1 }^n   \frac{1 }{ ia_i }   \right]  \sqrt{\varepsilon},\\
&  |\rho_{3}(j)| \leq  \bar{\rho}_{3} \sqrt{\varepsilon} :=  k  H_M  \left( \sum_{s=1}^{n}a_s^2\right) \sqrt{\sum_{i=1}^n \frac{c_i^2}{a_i^2} }\sqrt{\varepsilon},  \\
 &|\rho_{4}(j)| \leq   \bar{\rho}_{4} \sqrt{\varepsilon} :=  0.19245  n D_M  k \left[ \sum_{i=1 }^n   \frac{1 }{ ia_i }   \right]  \sqrt{\varepsilon},
\end{array}
\end{equation}
where
\begin{equation*}
\begin{array}{ll}
& c_i =   \Big[  \frac{ ( n  D_M + \sqrt{\varepsilon})(2n  D_M    + \sqrt{\varepsilon})}{12}  \\
& \qquad \qquad + \frac{ \sqrt{\varepsilon} ( \sqrt{\varepsilon} + n D_M    )  + D_M^2 n^2( 1+   \frac{ 1 }{3.3072 \pi  i }   )  }{3.3072 \pi i }  \Big].
\end{array}
\end{equation*}

\end{lem}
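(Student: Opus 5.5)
The plan is to compute the weighted sums in \eqref{Eq-rho} in closed form, component by component, and then bound the resulting trigonometric expressions uniformly in $j$. Set $m=i-j$ and $\phi_i=\omega_i j$. Then
\[
\rho_l(j)=-\frac{\varepsilon}{T}\sum_{m=0}^{T-1}(T-m)\,a_l(j+m),
\]
and every entry of $a_l$ is a finite trigonometric polynomial in $\omega_i(j+m)$. Concretely, $a_1$ has entries $-\frac{2ka_l}{a_i}\sin(\omega_i\cdot)\sin(\omega_l\cdot)H_{il}+kH_{il}$. Using product-to-sum, these become constants (which cancel against $kH$ by \eqref{Eq-integral-matrices}) plus $\cos((\omega_i\pm\omega_l)\cdot)$ and $\cos(2\omega_i\cdot)$ terms. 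The entries of $a_2$ and $a_4$ are $\pm\frac{k}{a_i}\sin(\omega_i\cdot)$ up to a factor 2. The term $a_3$ contains triple products $\sin\omega_i\sin\omega_s\sin\omega_r$.

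The key elementary identity, valid for $\nu=2\pi p/T$ with $p\not\equiv 0 \pmod T$, is
\[
\sum_{m=0}^{T-1}(T-m)e^{\mathrm{i}\nu(j+m)} = \frac{T\,e^{\mathrm{i}\nu j}}{1-e^{\mathrm{i}\nu}},
\]
obtained by differentiating the geometric sum and using $e^{\mathrm{i}\nu T}=1$. Hence
\[
\left|\frac{\varepsilon}{T}\sum_{m=0}^{T-1}(T-m)\sin(\nu(j+m)+\psi)\right|\le \frac{\varepsilon}{|1-e^{\mathrm{i}\nu}|}=\frac{\varepsilon}{2\sin(\pi p/T)}.
\]
Since $T\ge 2n+1$ and $p\le 2n<T$, we have $\pi p/T\le \pi\cdot 2n/(2n+1)$. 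I will use the lower bound $\sin x\ge c\,x$ on the relevant range, applied to $\pi p/T$ or to its reflection $\pi-\pi p/T$, which is needed for $p$ close to $T$. This gives a bound of order $\varepsilon T/(2c\pi p)$. With $T\le nD_M/\sqrt{\varepsilon}+2n+1$, or simply $T\approx nD_M/\sqrt\varepsilon$ in the active branch of the max, this becomes $\sqrt{\varepsilon}\,nD_M/(2c\pi p)$. The numerical constants $0.19245=1/(3\sqrt3)$ and $0.3849=2/(3\sqrt3)$ suggest that the authors instead maximized a function like $x(1-x^2)$ arising from a sharper estimate. I would try to reproduce them by bounding $\frac{\pi p/T}{\sin(\pi p/T)}$ together with the $T$-dependence. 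I expect matching these exact constants to be the fiddliest step. If necessary I will accept any uniform constant $c$ and then check that it is at most the stated one.

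Assembling the bounds: for $\rho_2$, each component gives $\frac{2k}{a_i}\cdot\frac{nD_M\sqrt\varepsilon}{c'\,i}$, and summing the components (using $|v|\le\sum|v_i|$) yields $\bar\rho_2$; $\rho_4$ is half of this. For $\rho_1$, the frequencies $\omega_i\pm\omega_l$ correspond to $p=|i-l|$ and $p=i+l$, and the diagonal terms correspond to $p=2i$. Bounding the induced norm by $\|H\|\le H_M$ (using Assumption \ref{Assumption-Hessian}) times the entrywise sums gives the bracket $\sum 1/i+\sum_{i\ne l}\frac{a_l}{a_i}\bigl[\frac1{|i-l|}+\frac1{i+l}\bigr]$.

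For $\rho_3$ I will expand $S^THS\le H_M\sum_s a_s^2$ in magnitude, which is not directly usable inside the weighted sum. Instead I would write $a_3$ componentwise as $\frac{k}{a_i}\sin(\omega_i\cdot)\,S^THS$ and sum by parts, taking into account that $S^THS$ has low-frequency content. This produces the more complicated constant $c_i$, whose quadratic terms in $nD_M$ and $\sqrt\varepsilon$ come from $T^2$-type sums divided by $T$, rescaled by $\varepsilon$. I would then bound $|\rho_3|\le kH_M(\sum a_s^2)\sqrt{\sum c_i^2/a_i^2}\,\sqrt\varepsilon$ via Cauchy--Schwarz over components. This last term is the main obstacle, because the cancellation there is less clean. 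My first attempt will be Abel summation using the explicit partial sums of $(T-m)\sin(\omega_i(j+m))$ computed above.
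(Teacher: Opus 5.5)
Your treatment of $\rho_1,\rho_2,\rho_4$ follows the paper's route. The identity $\sum_{m=0}^{T-1}(T-m)e^{\mathrm{i}\nu(j+m)}=Te^{\mathrm{i}\nu j}/(1-e^{\mathrm{i}\nu})$ is exactly the closed form the paper obtains by summation by parts in \eqref{Eq-rho2-2}, and your product-to-sum split for $\rho_1$ is \eqref{Eq-rho1-2}.

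The genuine gap is $\rho_3$. You set aside the pointwise bound $|S^T(s)HS(s)|\le H_M\sum_s a_s^2$ as ``not directly usable inside the weighted sum'' and plan an Abel summation exploiting the slow variation of $S^THS$. No such cancellation is needed: since $\varepsilon T\le nD_M\sqrt{\varepsilon}$, even the crude bound on $\frac{\varepsilon}{T}\sum_{s=j}^{j+T-1}(j+T-s)(\cdot)$ is already $O(\sqrt{\varepsilon})$, cf.\ \eqref{Eq-rho-gamma-bounds-eps}.

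The step that matters in the paper is the discrete Cauchy--Schwarz inequality in the summation index $s$. For the $i$-th component it separates $(j+T-s)\sin(\omega_i s)$ from $S^T(s)HS(s)$, as in \eqref{Ineq-rho3-2}. The second factor is bounded pointwise, $\frac{\varepsilon}{T}\sum(S^THS)^2\le\varepsilon H_M^2(\sum_s a_s^2)^2$. The first factor, $\frac{\varepsilon}{T}\sum(j+T-s)^2\sin^2(\omega_i s)$, is handled by writing $\sin^2=\frac12(1-\cos 2\omega_i s)$:
\begin{itemize}
\item the constant part equals $\frac{\varepsilon(T+1)(2T+1)}{12}\le\frac{(nD_M+\sqrt\varepsilon)(2nD_M+\sqrt\varepsilon)}{12}$;
\item repeated summation by parts on the $\cos(2\omega_i s)$ part gives the terms of $c_i$ with $3.3072\pi i$ in the denominator.
\end{itemize}
This split is what produces the product form $kH_M(\sum_s a_s^2)\sqrt{\sum_i(\cdot)/a_i^2}$ of $\bar\rho_3$.

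Abel summation against $S^THS$ produces something else: a constant built from the increments of $S^THS$ (which are $O(1/T)$ and bring in the frequencies $\omega_s$), multiplied by partial sums of $(T-m)\sin(\omega_i(j+m))$ of size $O(T^2/i)$. The result is still $O(\sqrt\varepsilon)$, but its dependence on $n$, $D_M$ and the $a_s$ is different, and it does not reduce to the stated $c_i$. So, as planned, you would not reach $\bar\rho_3$.

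For the other three functions, you have not identified where the constants come from. It is not a maximization of $x(1-x^2)$; the fact that $\max_{[0,1]}x(1-x^2)=2/(3\sqrt3)$ is a numerical coincidence. It is the chord bound $\sin x\ge\frac{3\sqrt3}{2\pi}x$ on $[0,\pi/3]$, which turns $\frac{\varepsilon}{2\sin(\pi p/T)}$ into $\frac{\varepsilon T}{3\sqrt3\,p}\le\frac{nD_M\sqrt\varepsilon}{3\sqrt3\,p}$. This gives $0.19245=1/(3\sqrt3)$ for $\rho_1,\rho_4$, and $0.3849$ for $\rho_2$ because of the factor $2k/a_i$. The number $3.3072\approx4\cdot\frac{3\sqrt3}{2\pi}$ in $c_i$ is the same constant.

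The chord bound requires two things:
\begin{itemize}
\item $\pi p/T\le\pi/3$ for every index that occurs, namely $p\le n$ for $\rho_2,\rho_4$ and $p\in\{|i-l|,\,i+l,\,2i\}$, so up to $2n$, for $\rho_1$;
\item $\varepsilon T\le nD_M\sqrt\varepsilon$, which needs the first branch of the max in \eqref{Eq-Omega} to be active.
\end{itemize}
Both hold for small $\varepsilon$ as in Theorem~\ref{THM-1}, but neither follows from $T\ge 2n+1$. Your fallback also fails. On $[0,\frac{2n}{2n+1}\pi]$ the best linear constant degenerates like $1/(2n)$, and the reflection $\pi-\pi p/T$ yields $1/(T-p)$ instead of $1/p$.

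Finally, for $\rho_1$ factor $H$ out on the right, $\rho_1(j)=k\big(\frac{\varepsilon}{T}\sum(j+T-s)[M(s)S^T(s)-I]\big)H$, and use $\|H\|\le H_M$. Your entry formula for $a_1$ is wrong: the $(i,l)$ entry of $MS^TH$ is $\sum_r M_iS_rH_{rl}$.
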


In addition, we assume the following uncertainties regarding the extremum point $\theta^*$ (influencing the initial condition) and  output map $y(t)$:
\begin{assumption}\label{Assumption-theta}
 The extremum point $\theta^*$ to be sought is uncertain from a known ball $B$ with radius $\sigma_0$ where  its elements satisfy $\theta^*_i \in [\underline{\theta}^*_i, \overline{\theta}^*_i]$, $i=1,...,n$  with $\sum_{i=1}^n ( \overline{\theta}^*_i-\underline{\theta}^*_i)^2 = \sigma_0^2$. The extremum value $Q^*$ is uncertain subject to $|Q^*-Q_0| \leq  \Delta_Q$  with known $Q_0$ and $\Delta_Q>0$.
\end{assumption}
We are in a position to formulate our main result on semi-global exponential stability of the estimation error system \eqref{Eq-dervative-tildetheta-Un-1}, \eqref{Eq-dervative-tildeeta-Un}:

\begin{theorem}\label{THM-1}
Let Assumptions \ref{Assumption-Hessian} and \ref{Assumption-theta} hold. Given
any $D_M$, let positive $k, \lambda$ and $\omega_h$ satisfy \eqref{Ineq-Assumption-2} with $H=H_m$. Consider the estimation error system \eqref{Eq-dervative-tildetheta-Un-1}, \eqref{Eq-dervative-tildeeta-Un} with an unknown time-varying delay that satisfies \eqref{Eq-Delay-Cond}. The functions $M(j)$, $S(j)$ and $\alpha(j)$ are defined by \eqref{Eq-S(t)-M(t)} and \eqref{Eq-alpha} with tuning parameters $a_i$, $\alpha_0$ and $T$ satisfying \eqref{Eq-Omega}. Let $\bar{\rho}_{j}$, $j=1,2,3,4$ are the bounds defined in \eqref{Eq-rho-gamma-bounds}. Given any $\sigma_0 > 0$, let $\sigma $ be subject to $\sigma_0 < \sigma$. Let there exists $\varepsilon^*>0$ that satisfy
% \begin{equation} \label{Ineq-THM-Un-condition-1}
% n  D_M  \lfloor \frac{1}{\sqrt{ \varepsilon^*}} \rfloor \geq 3n,
% \end{equation}
%  \begin{equation} \label{Ineq-THM-Un-condition-2}
% \varepsilon^* \max \{ \lambda+kH_m, \omega_h + 2\lambda - \varepsilon^*\lambda^2  \}  < 2,
% \end{equation}
%\begin{equation} \label{Ineq-THM-Un-condition-3}
%\begin{array}{ll}
%& (\sigma_0 +\sqrt{\varepsilon^*} \Delta_G |1-\varepsilon^* \lambda|^{D_M}) |1-\varepsilon^* k H_m|^{-D_M}   \\
%& + \sqrt{\varepsilon^*} \Delta_G     + \frac{ (\sqrt{\varepsilon^*})^3 \Delta_Y  }{  |1-\varepsilon^* \lambda|-|1-\varepsilon^* k H_m| } < \sigma,
%\end{array}
%\end{equation}
\begin{subequations}
\begin{equation} \label{Ineq-THM-Un-condition-1}
 n  D_M  \lfloor \frac{1}{\sqrt{ \varepsilon^*}} \rfloor \geq 2n+1,
 \end{equation}
 \begin{equation} \label{Ineq-THM-Un-condition-2}
 \varepsilon^* \max \{ \lambda+kH_m, \omega_h + 2\lambda - \varepsilon^*\lambda^2  \}  < 2,
  \end{equation}
 \begin{equation}  \label{Ineq-THM-Un-condition-3}
 \!\!\!\!\!\!\!\!\!\!\!\!
 \begin{array}{ll}
& (\sigma_0 +\sqrt{\varepsilon^*} \Delta_G |1-\varepsilon^* \lambda|^{D_M}) |1-\varepsilon^* k H_m|^{-D_M}     \\
& \qquad \quad + \sqrt{\varepsilon^*} \Delta_G     + \frac{ (\sqrt{\varepsilon^*})^3 \Delta_Y  }{  |1-\varepsilon^* \lambda|-|1-\varepsilon^* k H_m| } < \sigma,
 \end{array}
\end{equation}
\end{subequations}
where
\begin{equation} \label{Eq-bounds-THM-1}
\!\!\!\!\!\! 
\begin{array}{ll}
& \Delta_{G}  =   \bar{\rho}_{1} \sigma + \bar{\rho}_{2}  \frac{\sigma_{\eta}}{\alpha_0}   + \bar{\rho}_3 \alpha_0   + \bar{\rho}_{4}  \frac{\sigma^2 H_M} {\alpha_0},    \\
& \sigma_{\eta}  =    \Delta_Q |1-\varepsilon^* \omega_h|^{-D_M}   +  \frac{  \varepsilon^*   \omega_h \sigma_y  } { |1-\varepsilon^* \lambda|^2 - |1-\varepsilon^* \omega_h| }, \\
&  \sigma_{y}  =      \frac{H_M}{2}  |1-\varepsilon^* \lambda|^{-2D_M} \left(  \sigma  + \alpha_0 \sqrt{\sum_{i=1}^n a_i^2}  \right) ^2,   \\
& \Delta_{Y}   =   k H_M \Delta_{G} + \bar{\rho}_1  (\Delta + { \sqrt{\varepsilon^*} \bar{\Delta}_{out} })  \\
&  + \frac{ \omega_h  \bar{\rho}_2 }{\alpha_0}  [\sigma_{y}+\sigma_{\eta}  ] +    \frac{ \bar{\rho}_{2}   \lambda ( \sigma_{\eta} |1-\varepsilon^* \omega_h|  +  \varepsilon^* \omega_h \sigma_y   )}{ \alpha_0|1-\varepsilon^* \lambda|}     \\
&  + \bar{\rho}_{3} \alpha_0  \lambda  + 2 \bar{\rho}_{4} \sigma \alpha_0^{-1}  H_M   (\Delta + { \sqrt{\varepsilon^*} \bar{\Delta}_{out}})    \\
&  \qquad  + {  \bar{\Delta}_{out} }  +  \varepsilon^* \bar{\rho}_{4}  \alpha_0^{-1}   H_M (\Delta + { \sqrt{\varepsilon^*} \bar{\Delta}_{out}})^2 \\
&  \qquad \qquad +\frac{ \bar{\rho}_{4}  \alpha_0^{-1}  \lambda}{|1-\varepsilon^* \lambda|} H_M  [\sigma + \varepsilon^* (\Delta + { \sqrt{\varepsilon^*} \bar{\Delta}_{out}})] ^2,\\
 &  \Delta  =    \frac{2 k}{\alpha_0} [\frac{H_M}{2}(\sigma+\alpha_0 \sqrt{\sum_{i=1}^n a_i^2} )^2 + \sigma_{\eta}]\sqrt{ \sum_{i=1}^n \frac{1}{  a_i^2} },  \\
 & {  \bar{\Delta}_{out}}  = \frac{k  D_M H_M}{  \alpha_0} \Big( \sqrt{ \sum_{i=1}^n \frac{1}{  a_i^2} } \Big) [\sigma+ \alpha_0 \sqrt{\sum_{i=1}^n a_i^2}]\times \\
 & \qquad   [1+ {|1-\varepsilon^*\lambda|^{-D_M}}]\Big[  \frac{2\sqrt{\varepsilon^*}k(\sigma_y+\sigma_{\eta})}{\alpha_0} \sqrt{ \sum_{i=1}^n \frac{1}{  a_i^2} } \\
 & \qquad   + \frac{\alpha_0\sqrt{\sum_{i=1}^n a_i^2} (\lambda\sqrt{\varepsilon^*}+\frac{2 \pi}{(1-\sqrt{\varepsilon^*})D_M}[\frac{ \pi \sqrt{\varepsilon^*}}{(1-\sqrt{\varepsilon^*})D_M}+1]) }{|1-\varepsilon^* \lambda|}  \Big].
 \end{array}
\end{equation}
Then, for all $\varepsilon \in (0,\varepsilon^*]$,  the following inequalities hold:
\begin{equation} \label{Ineq-THM-Un}
 |\tilde{\theta}(j)|   <   \sigma |1-\varepsilon \lambda|^j, \,\,\,  |\tilde{\eta}(j)|  < \sigma_{\eta} |1-\varepsilon \lambda|^{2j},   \,\,\,  j \geq D_M,
\end{equation}
meaning that the estimation error system \eqref{Eq-dervative-tildetheta-Un-1}, \eqref{Eq-dervative-tildeeta-Un} is exponentially stable with a decay rate $|1-\varepsilon \lambda|$. Thus, $\theta(j)$ converges exponentially to $\theta^*$ with a decay rate $|1-\varepsilon\lambda|$. Moreover, for any $D_M$ and $\sigma_0$, inequalities \eqref{Ineq-THM-Un-condition-1}, \eqref{Ineq-THM-Un-condition-2} and \eqref{Ineq-THM-Un-condition-3} are always feasible for small enough $\varepsilon^*$, $k$ and $\lambda$ and appropriate $\sigma > \sigma_0$.
\end{theorem}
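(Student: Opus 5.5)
The argument is by contradiction on the first exit time, in the style of the time-delay approach to averaging. By \eqref{Ineq-THM-Un-condition-1}, $T=nD_M\lfloor 1/\sqrt{\varepsilon}\rfloor\ge 2n+1$ for every $\varepsilon\in(0,\varepsilon^*]$. Hence the averaging identities \eqref{Eq-integral-matrices} hold, the bounds of Lemma \ref{Lem-Upper-Bounds} apply, and all frequencies are $O(\sqrt{\varepsilon})$ with the explicit constants entering $\bar\Delta_{out}$.

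\textbf{Initial segment.} On $0\le j\le D_M$ we have $|\tilde\theta(j)|=|\theta(0)-\theta^*|\le\sigma_0<\sigma$ by Assumption \ref{Assumption-theta}, and $|\tilde\eta(D_M)|\le\Delta_Q\le\sigma_\eta$.

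\textbf{Contradiction hypothesis.} Suppose \eqref{Ineq-THM-Un} fails, and let $j^*>D_M$ be the first index at which either $|\tilde\theta(j^*)|\ge\sigma\bar\lambda^{j^*}$ or $|\tilde\eta(j^*)|\ge\sigma_\eta\bar\lambda^{2j^*}$. For all $j<j^*$ both inequalities hold, and we derive bounds on that window.

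\textbf{Output and filter bounds.} Since $\alpha(j-D(j))\le\alpha_0\bar\lambda^{j}\bar\lambda^{-D_M}$ and $|\tilde\theta(j-D(j))|\le\sigma\bar\lambda^{j-D_M}$, we get $|y(j)-Q^*|\le\sigma_y\bar\lambda^{2j}$; this is the claim \eqref{Ineq-y-Q*-delay} of Remark \ref{Rem-Diff-UnB-Class-1}. The filter \eqref{Eq-dervative-tildeeta-Un} is the scalar recursion $\tilde\eta(j+1)=(1-\varepsilon\omega_h)\tilde\eta(j)+\varepsilon\omega_h(y(j)-Q^*)$. Condition \eqref{Ineq-THM-Un-condition-2} gives $|1-\varepsilon\omega_h|<\bar\lambda^2$, which is equivalent to $\varepsilon(\omega_h+2\lambda-\varepsilon\lambda^2)<2$ together with $\omega_h>2\lambda$. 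Using monotonicity in $\varepsilon\le\varepsilon^*$ (to be checked), summing the geometric series yields $|\tilde\eta(j^*)|<\sigma_\eta\bar\lambda^{2j^*}$. So only the $\tilde\theta$ inequality can be the one that fails at $j^*$.

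\textbf{Bounds on the perturbation terms.} Insert the window bounds into $G$ from \eqref{Eq-G-delay}, using Lemma \ref{Lem-Upper-Bounds}. Each term carries the factor $\bar\lambda^{j}$, because $\alpha^{-1}\tilde\eta\sim\bar\lambda^{j}$, $\alpha\sim\bar\lambda^j$, and $\alpha^{-1}|\tilde\theta|^2_H\sim\bar\lambda^j$. This gives $|G(j)|\le\sqrt\varepsilon\,\Delta_G\bar\lambda^j$.

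Next bound $\Delta(j)$ from \eqref{Eq-Delta} by $\Delta\,\bar\lambda^j$, using $|M|\le 2\sqrt{\sum a_i^{-2}}$. The delay term $\bar\Delta_{\sqrt\varepsilon}$ is bounded via \eqref{Eq-diff-Q}. There are at most $D_M$ summands. Each increment $\tilde\theta(k-1)-\tilde\theta(k)$ is $\varepsilon$ times the right-hand side, of size $\varepsilon\cdot 2k(\sigma_y+\sigma_\eta)\sqrt{\sum a_i^{-2}}/\alpha_0\cdot\bar\lambda^{k}$. Each increment of $\alpha S$ is bounded from \eqref{Eq-diff-alpha-S} with $\sin(2\pi i/T)\le 2\pi\sqrt\varepsilon/((1-\sqrt\varepsilon)D_M)$, since $i/T\le 1/(D_M(1/\sqrt\varepsilon-1))$. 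Together these give $|\bar\Delta_{\sqrt\varepsilon}(j)|\le\sqrt\varepsilon\,\bar\Delta_{out}\bar\lambda^j$. Plugging all of this, plus $\bar\lambda^{-1}-1=\varepsilon\lambda/\bar\lambda$, into \eqref{Eq-Y-delay} gives $|Y(j)|\le\varepsilon^{3/2}\Delta_Y\bar\lambda^j$ term by term.

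\textbf{Closing the argument with $z$.} Solve \eqref{Eq-dervative-z-Y-delay}. Since $kH\ge kH_m>\lambda$ and \eqref{Ineq-THM-Un-condition-2} hold, $\|I-\varepsilon kH\|\le|1-\varepsilon kH_m|<\bar\lambda$. This follows from $\varepsilon(\lambda+kH_m)<2$ together with $\lambda<kH_m$, taking care with the sign of $1-\varepsilon kH$. Hence
\[
|z(j)|\le|1-\varepsilon kH_m|^{j-D_M}|z(D_M)|+\frac{\varepsilon^{3/2}\Delta_Y\bar\lambda^{j}}{\bar\lambda-|1-\varepsilon kH_m|}.
\]
Here $|z(D_M)|\le\sigma_0+\sqrt\varepsilon\Delta_G\bar\lambda^{D_M}$. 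Then $|\tilde\theta(j^*)|\le|z(j^*)|+|G(j^*)|$. Write $|1-\varepsilon kH_m|^{j-D_M}\le\bar\lambda^{j}|1-\varepsilon kH_m|^{-D_M}$, which needs $|1-\varepsilon kH_m|\le\bar\lambda$. Then \eqref{Ineq-THM-Un-condition-3} gives $|\tilde\theta(j^*)|<\sigma\bar\lambda^{j^*}$, a contradiction.

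\textbf{Feasibility.} As $\varepsilon^*\to0$, the left side of \eqref{Ineq-THM-Un-condition-3} tends to $\sigma_0$. One must check that the denominator $\bar\lambda-|1-\varepsilon kH_m|=\varepsilon(kH_m-\lambda)$ is compensated by $\varepsilon^{3/2}$, which leaves $O(\sqrt\varepsilon)$. The denominator in $\sigma_\eta$ likewise scales like $\varepsilon$ against the factor $\varepsilon\omega_h$, so it stays bounded. Thus any $\sigma>\sigma_0$ works for small $\varepsilon^*$.

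\textbf{Main obstacle.} The delicate part is the uniform-in-$\varepsilon$ bookkeeping. It must be shown that each constant ($\sigma_\eta$, $\Delta_G$, $\Delta_Y$, $\bar\Delta_{out}$), defined with $\varepsilon^*$, still dominates for every smaller $\varepsilon$. The same applies to the $\bar\lambda^{-D_M}$ factors from the delay, and especially to the bound on $\bar\Delta_{\sqrt\varepsilon}$ with unknown time-varying $D(j)$. I would first verify monotonicity of $|1-\varepsilon c|^{-D_M}$ and of the ratios $\varepsilon/(\bar\lambda^2-|1-\varepsilon\omega_h|)$ in $\varepsilon$.
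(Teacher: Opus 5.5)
Your proposal is essentially the paper's own proof: the same first-exit-index contradiction, the same window bounds on $y-Q^*$, $\tilde\eta$, $G$, $\Delta$, $\bar\Delta_{\sqrt{\varepsilon}}$ and $Y$, the same variation-of-constants estimate for $z$, and the same closure through \eqref{Ineq-THM-Un-condition-3}; putting $\tilde\eta$ into the exit hypothesis is only a cosmetic difference. The paper takes for granted three points you would still have to tighten: for $\varepsilon\omega_h\le 1$ the ratio $\varepsilon\omega_h/(\bar\lambda^2-|1-\varepsilon\omega_h|)=\omega_h/(\omega_h-2\lambda+\varepsilon\lambda^2)$ grows as $\varepsilon$ decreases, so the monotonicity you plan to check fails and you should bound it by $\omega_h/(\omega_h-2\lambda)$ rather than by its value at $\varepsilon^*$; the estimate $\|I-\varepsilon kH\|\le|1-\varepsilon kH_m|$ needs $\varepsilon k(H_m+H_M)\le 2$, not only \eqref{Ineq-THM-Un-condition-2}; and at the exit index $G(j^*)$ contains the not-yet-controlled $\tilde\theta(j^*)$, so $|G(j^*)|$ must be bounded through an extra one-step a priori estimate from the recursion rather than directly by $\sqrt{\varepsilon}\Delta_G\bar\lambda^{j^*}$.
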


{\bf Proof:}
See Appendix.
$\hfill \Box$

%{\color{red}
%\begin{remark}
%The following qualitative result is established by Theorem \ref{THM-1}:
%Following Theorem \ref{THM-1}, a qualitative result can be stated: Given any delay bound $D_M$, and parameters $a_i$, $k$, $\lambda$, $\omega_h$ satisfying \eqref{Ineq-Assumption-2}, there exist sufficiently small tuning parameters   $\varepsilon$ and $\omega_i=O(\sqrt{\varepsilon})$, $i=1,...,n$ such that the closed-loop system in (\ref{Eq-dervative-hattheta-Un-1}) and (\ref{Eq-dervative-eta-Un}) is exponentially stable at the origin. The latter means that the input
%$\theta(j)$ and output $y(j)$ exponentially converge to $\theta^*$ and $Q^*$, respectively.
%\end{remark}
%}

\begin{remark}\label{Rem-deacy-rate}
Differently from the continuous-time, the decay rates of $\tilde{\theta}_{av}$, $\tilde{\eta}$ and $\alpha(j)$ depend on $\varepsilon$, they are given by $|1-\varepsilon k H|$, $|1-\varepsilon \omega_h|$ and $|1-\varepsilon \lambda|$, respectively. Consequently, to guarantee the exponential convergence of \eqref{Eq-dervative-hattheta-Un-1}, \eqref{Eq-dervative-eta-Un}, the following conditions
\begin{equation} \label{Ineq-Assumption-2-result}
	  |1-\varepsilon \lambda|> |1- \varepsilon k H|,\,\,\,\, |1-\varepsilon \lambda|^2 > |1-\varepsilon \omega_h|,
\end{equation}
should be satisfied, which are equivalent to inequality \eqref{Ineq-THM-Un-condition-2}. %The meaning og these condition is that the first condition makes sure that the averaged system \eqref{Eq-Averaged-Sys} converges faster than the closed-loop \eqref{Eq-dervative-tildeeta-Un}, \eqref{Eq-dervative-tildetheta-Un-2} while the second condition makes sure that the filter  converges than the whole system.
These conditions ensure that both the averaged system \eqref{Eq-Averaged-Sys} and the filter dynamics \eqref{Eq-dervative-eta-Un} converge faster (i.e., possess smaller decay rates) than the whole closed-loop system \eqref{Eq-dervative-hattheta-Un-1}, \eqref{Eq-dervative-eta-Un}.
%The interpretation of these conditions is as follows: the first condition ensures that the averaged system \eqref{Eq-Averaged-Sys} converges faster than the closed-loop dynamics \eqref{Eq-dervative-tildeeta-Un}-\eqref{Eq-dervative-tildetheta-Un-2}, while the second condition ensures that the filter dynamics converge faster than the overall system.
Note that the conditions in \eqref{Ineq-Assumption-2-result} are reduced to \eqref{Ineq-Assumption-2} for $\varepsilon \to 0$.
\end{remark}

%\begin{remark}\label{Rem-Unbiased-Decay-Rate-2}
%Unlike the continuous-time case, the conditions \eqref{Ineq-THM-Un-condition-1}-\eqref{Ineq-THM-Un-condition-3} in Theorem \ref{THM-1} impose clear restrictions on the decay rate $\bar{\lambda}$. More precisely, for a given $\lambda$, larger delay $D_M$ leads to smaller $\varepsilon^*$, i.e. slower convergence and lower dither frequency. The same holds true for the classical ES (see next section) where the conditions \eqref{Ineq-Coro-Class-condition-1}-\eqref{Ineq-Coro-Class-condition-3} of Corollary \ref{THM-2} also impose restrictions on the decay rate $1-\varepsilon k H_m$.
%\end{remark}

\begin{remark}\label{Rem-Non-Quad}
If the map is a non-quadratic, yet $C^3$ differentiable function, it can be approximated by the quadratic form in \eqref{Eq-Quadratic-Form}. This approximation is valid in a neighborhood of $\theta^*$, specifically for
$|\theta(j)-\theta^*| \leq \sigma_1$ with some known $\sigma_1$. Consequently, the results from Theorem \ref{THM-1} hold true for $\sigma=\sigma_1-\alpha_0 \sqrt{\sum_{i=1}^n a_i^2}$.
Following the arguments of Theorem \ref{THM-1}, if conditions (\ref{Ineq-THM-Un-condition-1}), (\ref{Ineq-THM-Un-condition-2}), and (\ref{Ineq-THM-Un-condition-3}) are met, then the solutions of equations (\ref{Eq-dervative-tildetheta-Un-1}) and (\ref{Eq-dervative-tildeeta-Un}) are regionally exponentially stable for all $\varepsilon\le \varepsilon^*$, $D(j)$ subject to \eqref{Eq-Delay-Cond} and the initial condition $|\tilde{\theta}(0)| \leq \sigma_0$. %, i.e. $|\tilde{\theta}(j)| \leq \sigma e^{-\lambda t}$ with $\sigma<\sigma_1$.
Similar results hold true for the classical ES in the next section.
\end{remark}

\begin{remark}\label{Rem-Diff-UnB-Class-2}
Compared to the classical ES,  high-pass filter and exponential perturbation/demodulation signals in the unbiased ES lead to more challenging averaging-based stability analysis.
 Additional restrictions  \eqref{Ineq-Assumption-2-result} on the tuning parameters are needed  along with  the convergence  proof of the additional  error term ${\tilde{\eta}}(j)$.
To manage with the exponential convergence we assume and later prove that $ |\tilde\theta(j)| \leq \sigma \bar{\lambda}^j$ (see \eqref{Ineq-tildetheta-sigma-delay}) for some $\sigma>0$ instead of $|\tilde\theta(j)| \leq \sigma $ for the classical ES.
\end{remark}

\begin{remark}\label{Rem-Delay-Free-Un}
For delay-free case $D_M=0$, i.e. $D(j) \equiv 0$, less restrictive stability conditions than in Theorem \ref{THM-1} for the unbiased ES \eqref{Eq-dervative-tildetheta-Un-1}, \eqref{Eq-dervative-tildeeta-Un} can be derived, where from \eqref{Eq-Omega} it follows that $T=2n+1$ and the frequency $\omega_i={2\pi i \over 2n+1}$  is not slow. We have here $\rho_i=O(\epsilon), i=1,...,4$ (and not $\rho_i=O(\sqrt{\epsilon)}$):
\begin{equation} \label{Eq-rho-gamma-bounds-Delayfree}
\!\!\!\!\!\!\!\!\!
\begin{array}{ll}
&||\rho_{1}(j)|| \leq \varepsilon  \bar{\rho}_{1}  :=   \varepsilon \frac{k H_M}{2}   \Big[ \sum_{i=1 }^n   \frac{1 }{ |\sin(\omega_i)| }  \\
& \qquad \qquad    +   \sum_{i \neq l}^n  \frac{a_l}{a_i}  [   \frac{ 1 }{| \sin(\frac{\omega_i-\omega_l}{2})| }   +   \frac{ 1 }{ | \sin(\frac{\omega_i+\omega_l}{2})|  }  ]  \Big],\\
&|\rho_{2}(j)| \leq  \varepsilon  \bar{\rho}_{2}   :=   \varepsilon  k \left[ \sum_{i=1 }^n   \frac{1 }{ a_i |\sin(\frac{\omega_i}{2})| }   \right] ,\\
&  |\rho_{3}(j)| \leq \varepsilon \bar{\rho}_{3}  := \varepsilon k  H_M  \left( \sum_{s=1}^{n}a_s^2\right) \sqrt{\sum_{i=1}^n \frac{\bar{c}_i^2}{a_i^2} }  ,  \\
&|\rho_{4}(j)| \leq  \varepsilon  \bar{\rho}_{4}   :=   \varepsilon  \frac{k}{2} \left[ \sum_{i=1 }^n   \frac{1 }{ a_i |\sin(\frac{\omega_i}{2})| }   \right],
\end{array}
\end{equation}
where
\begin{equation*}
\begin{array}{ll}
& \bar{c}_i =   \Big[  \frac{ ( n + 1)(4n+3) }{ 6 }   \\
& \qquad \qquad + \frac{ 1  }{ 4 |\sin(\omega_i)| }  \left( |\cot(\omega_i)| + 2n + 2 + \frac{ 1 }{ 2n+1 }  \right)   \Big].
\end{array}
\end{equation*}
Let there exists $\varepsilon^*$ that satisfy inequality \eqref{Ineq-THM-Un-condition-2} and  \begin{equation}
\sigma_0 +2 \varepsilon^*  \Delta_G   + \frac{ ( \varepsilon^* )^2 \Delta_Y  }{  |1-\varepsilon^* \lambda|-|1-\varepsilon^* k H_m| } < \sigma,    \label{Ineq-THM-Un-condition-DelayFree}
\end{equation}
where the upper-bounds $\Delta_G$, $\Delta_Y$, $\Delta$, $\sigma_y$, $\sigma_{\eta}$ are defined in \eqref{Eq-bounds-THM-1} with $D_M=\bar{\Delta}_{out}=0$. Then, by arguments  of Theorem \ref{THM-1},  inequalities \eqref{Ineq-THM-Un} hold for all $\varepsilon \in (0,\varepsilon^*]$.
\end{remark}

For completely unknown maps, the following  qualitative stability result follows from Theorem \ref{THM-1}:
\begin{corollary}\label{Coro-1}
Given any $\sigma_0$, $D_M$ and positive tuning parameters  $a_i$, $k$, $\lambda$, $\omega_h$ satisfying \eqref{Ineq-Assumption-2}, there exist sufficiently small tuning parameters   $\varepsilon$ and $\omega_i=O(\sqrt{\varepsilon})$, $i=1,...,n$ such that the estimation error  system \eqref{Eq-dervative-tildetheta-Un-1}, \eqref{Eq-dervative-tildeeta-Un} is exponentially stable.
%The latter means that the input $\theta(j)$ and output $y(j)$ exponentially converge to $\theta^*$ and $Q^*$, respectively.
\end{corollary}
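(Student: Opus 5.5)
The corollary will follow from Theorem \ref{THM-1} by showing that, for any fixed data, its hypotheses can be met. Since the corollary says nothing about the map beyond $H>0$ and the unknown $\theta^*$, $Q^*$, I first fix \emph{some} admissible values of the constants the theorem requires: take $H_m:=\lambda_{\min}(H)$, $H_M:=\lambda_{\max}(H)$, $\Delta_Q:=|Q^*-Q_0|+1$, and a $\sigma_0>0$ with $|\theta(0)-\theta^*|\le\sigma_0$. These exist even though they are unknown to the designer, and a qualitative statement only needs existence. Then \eqref{Ineq-Assumption-2} with the true $H$ gives $kH_m>\lambda$ and $\omega_h>2\lambda$, which is exactly the requirement of Theorem \ref{THM-1} with $H=H_m$. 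Fix also $\sigma:=2\sigma_0$ and the given $a_i$ and $\alpha_0$.

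The core step is to verify that \eqref{Ineq-THM-Un-condition-1}--\eqref{Ineq-THM-Un-condition-3} hold for all sufficiently small $\varepsilon^*$, with $k,\lambda,\omega_h$ kept fixed.

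\textbf{Condition \eqref{Ineq-THM-Un-condition-1}.} It holds once $\lfloor 1/\sqrt{\varepsilon^*}\rfloor\ge 3$, i.e. for small $\varepsilon^*$, assuming $D_M\ge1$. The case $D_M=0$ is handled separately by Remark \ref{Rem-Delay-Free-Un}.

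\textbf{Condition \eqref{Ineq-THM-Un-condition-2}.} It is trivial as $\varepsilon^*\to0$.

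\textbf{Condition \eqref{Ineq-THM-Un-condition-3}.} Here I check that every auxiliary bound in \eqref{Eq-bounds-THM-1} stays bounded as $\varepsilon^*\to0$.
\begin{itemize}
\item The factors $|1-\varepsilon^*\lambda|^{-2D_M}$ and $|1-\varepsilon^*\omega_h|^{-D_M}$ tend to $1$, so $\sigma_y$ stays bounded.
\item In $\sigma_\eta$, the denominator $|1-\varepsilon^*\lambda|^2-|1-\varepsilon^*\omega_h|=\varepsilon^*(\omega_h-2\lambda)+O((\varepsilon^*)^2)$ is of order $\varepsilon^*$, matching the numerator $\varepsilon^*\omega_h\sigma_y$. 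Hence $\sigma_\eta\to\Delta_Q+\omega_h\sigma_y/(\omega_h-2\lambda)$, which is finite.
\item The $\bar\rho_l$ of Lemma \ref{Lem-Upper-Bounds} are bounded in $\varepsilon^*$ (the $c_i$ are monotone in $\sqrt{\varepsilon}$). Therefore $\Delta_G$, $\Delta$, $\bar\Delta_{out}$ and $\Delta_Y$ all stay bounded. For $\bar\Delta_{out}$ note that the terms with $(1-\sqrt{\varepsilon^*})^{-1}$ remain bounded.
\end{itemize}

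With these bounds, the left side of \eqref{Ineq-THM-Un-condition-3} consists of three parts.
\begin{itemize}
\item The first term tends to $\sigma_0$, since $\sqrt{\varepsilon^*}\Delta_G\to0$ and $|1-\varepsilon^*kH_m|^{-D_M}\to1$.
\item The term $\sqrt{\varepsilon^*}\Delta_G$ tends to $0$.
\item In the last term, the denominator is $|1-\varepsilon^*\lambda|-|1-\varepsilon^*kH_m|=\varepsilon^*(kH_m-\lambda)$ for small $\varepsilon^*$. So the term equals $\sqrt{\varepsilon^*}\Delta_Y/(kH_m-\lambda)\to0$.
\end{itemize}
The left side therefore converges to $\sigma_0<\sigma$, and some $\varepsilon^*>0$ satisfies all three conditions.

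Applying Theorem \ref{THM-1} for every $\varepsilon\in(0,\varepsilon^*]$ then gives \eqref{Ineq-THM-Un}. Since $T$ is fixed by \eqref{Eq-Omega}, $\omega_i=2\pi i/T=O(\sqrt\varepsilon)$, which is the claimed exponential stability.

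The main obstacle is the careful order-of-magnitude bookkeeping in the ratio terms, where a denominator vanishes like $\varepsilon^*$. One must confirm that each such denominator is matched by an $\varepsilon^*$ or $(\sqrt{\varepsilon^*})^3$ in the numerator, and this depends on the strict inequalities in \eqref{Ineq-Assumption-2}. A second check is that no constant in Lemma \ref{Lem-Upper-Bounds} secretly grows as $\varepsilon\to0$: the $D_M$ and $n$ factors are fixed, and the $\sqrt\varepsilon$ scaling of $\rho_l$ is already factored out.
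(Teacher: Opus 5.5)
Your proposal is correct and follows the paper's route: the corollary is read off from Theorem~\ref{THM-1} once \eqref{Ineq-THM-Un-condition-1}--\eqref{Ineq-THM-Un-condition-3} are shown feasible for small $\varepsilon^*$ with $k,\lambda,\omega_h$ fixed. The paper only asserts this feasibility, while you check it term by term and correctly observe that the $O(\varepsilon^*)$ denominators in $\sigma_\eta$ and in the last term of \eqref{Ineq-THM-Un-condition-3} are matched by their numerators because the inequalities \eqref{Ineq-Assumption-2} are strict. Your aside on $D_M=0$ is the only loose end, since Remark~\ref{Rem-Delay-Free-Un} uses $T=2n+1$ and so $\omega_i$ is not $O(\sqrt{\varepsilon})$ there; however, this wrinkle is already in the paper's own statement and does not affect the stability conclusion.
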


%%%%%%%%%%%%%%%%%%%%%%%%%%%%%%%%%%%%%%%%%%%%%%%%%%%%%%%%%%%%%%%%%%%%%%%%%%%%%%%%%%%%%%%%%%%%%%%%%%%%%%%%%%%%%%%%%%%%%%%%%%%%%%%%%%%%%%%%%%%%%%%%%%%%%%%%%%%%%%%%%%%%%%%%%%%%%%%%%%%%%%%%%%%%%%%%

 \section{Classical ES with unknown delays} \label{Sec-Bounded-ES}

In this section, we consider the classical ES algorithm  in the presence of unknown large time-varying  measurement delay. Consider the ES algorithm \eqref{Eq-y-Q}, \eqref{Eq-hattheta-Un}, \eqref{Eq-dervative-hattheta-Un-1} with $\alpha(j) \equiv 1$  and $\eta(j) \equiv 0$ for all $j \geq 0$, which results in the following
equation for the real-time estimate $\hat{\theta}(j)$:
\begin{equation}\label{Eq-hattheta-derivative-Class}
\begin{array}{ll}
&  \hat{\theta}(j+1) =     \hat{\theta}(j) - \varepsilon   kM(j)y(j), \quad j \geq D_M, \\
& \hat{\theta}(j)=\theta(0), \quad  0 \leq j \leq D_M,
 \end{array}
\end{equation}
where $S(j)$, $M(j)$ are defined in \eqref{Eq-S(t)-M(t)}, $k$ is a positive gain. % and $D(j)$ is an unknown bounded time-varying delay satisfying \eqref{Assumption-D}.
Then, the estimation error $\tilde{\theta}(j)=\hat{\theta}(j)-\theta^*$, $j \geq 0$ is governed by
\begin{equation} \label{Eq-derivative-tildetheta-Class}
\!\!\!\!\!\!
\begin{array}{ll}
&    \tilde{\theta}(j+1) = \tilde{\theta}(j)  - \varepsilon    k M(j)\Big[ Q^* \\
  & \qquad  \qquad \quad + \frac{1}{2} |\tilde{\theta}(j-D(j))  +  S(j)|^2_{ H }  \Big],  \,\,\,\,  j \geq  D_M, \\
  & |\tilde{\theta}(j)| \leq \sigma_0,  \qquad \qquad \qquad \qquad \qquad 0 \leq j \leq D_M.
 \end{array}
\end{equation}
%Consider the closed-loop system \eqref{Eq-hattheta-derivative}, \eqref{Eq-eta-derivative} with $\alpha_0=1$, $\lambda=0$, $\omega_h=0$ and $\eta(0)=-Q^*$, then \eqref{Eq-tildetheta-derivative}, \eqref{Eq-tildeeta-derivative} is reduced to
%\begin{align}\label{Eq-tildethtea-derivative-bound}
% \dot{\tilde{\theta}}(t) =  - K M(t)  \left(  Q^* + \frac{H}{2}  [ \tilde{\theta}(t) +  S(t)]^2   \right),    |\tilde{\theta}(0)| \leq \sigma_0
%\end{align}
%Note that \eqref{Eq-tildetheta-derivative-Class} with $D(j) = 0$ is the estimation error in the discrete classical ES algorithm which was analyzed in \cite{yang2023robust} via the time-delay.
By similar arguments of Theorem \ref{THM-1}, we derive the following result:
\begin{theorem}\label{THM-2}
Let Assumptions \ref{Assumption-Hessian} and \ref{Assumption-theta} hold. Given
any $D_M>0$, consider the estimation error system \eqref{Eq-derivative-tildetheta-Class} with an unknown time-varying delay $D(j)$ that satisfies \eqref{Eq-Delay-Cond}. The functions $M(j)$ and $S(j)$ are defined by \eqref{Eq-S(t)-M(t)} with tuning parameters $a_i$ and $T$ satisfying \eqref{Eq-Omega}. Let $\bar{\rho}_{j}$, $j=1,2,3,4$ be the bounds defined in \eqref{Eq-rho-gamma-bounds}. Given any $\sigma_0 > 0$, let $\sigma $ be subject to $\sigma_0 < \sigma$. Let there exists $\varepsilon^*$ that satisfy
\begin{subequations}
\begin{equation}\label{Ineq-Coro-Class-condition-1}
 n  D_M \lfloor \frac{1}{\sqrt{ \varepsilon^*}} \rfloor \geq 2n+1,
 \end{equation}
 \begin{equation}\label{Ineq-Coro-Class-condition-2}
  \varepsilon^*  kH_m  < 2,
  \end{equation}
 \begin{equation}\label{Ineq-Coro-Class-condition-3}
 \!\!\!\!\!\!
\begin{array}{ll}
 (\sigma_0 +\sqrt{\varepsilon^*} \Delta_G  ) |1-\varepsilon^* k H_m|^{-D_M}  \\
 \qquad \qquad \quad  + \sqrt{\varepsilon^*} \Delta_G     + \frac{ (\sqrt{\varepsilon^*})^3 \Delta_Y  }{1 -|1-\varepsilon^* k H_m| } < \sigma,
\end{array}
\end{equation}
\end{subequations}
where
\begin{equation}  \label{Eq-bounds-Coro-1}
\!\!\!\!\!\! 
\begin{array}{ll}
& \Delta_{G}  =   \bar{\rho}_{1} \sigma + \bar{\rho}_{2} (Q_0 + \Delta_Q)  + \bar{\rho}_3   + \bar{\rho}_{4}  \sigma^2 H_M,    \\
&  \sigma_{y}  =      \frac{H_M}{2}   \left(  \sigma  +  \sqrt{\sum_{i=1}^n a_i^2}  \right) ^2,   \\
& \Delta_{Y}   =   k H_M \Delta_{G} + \bar{\rho}_1  (\Delta + { \sqrt{\varepsilon^*} \bar{\Delta}_{out} })  \\
&  \qquad\quad + 2 \bar{\rho}_{4} \sigma   H_M   (\Delta + { \sqrt{\varepsilon^*} \bar{\Delta}_{out}})   + {   \bar{\Delta}_{out} }   \\
& \qquad \qquad  +  \varepsilon^* \bar{\rho}_{4}    H_M (\Delta + { \sqrt{\varepsilon^*} \bar{\Delta}_{out}})^2, \\ 
 &  \Delta  =     2 k  [ \sigma_{y}  +  Q_0+\Delta_Q ]\sqrt{ \sum_{i=1}^n \frac{1}{  a_i^2} },  \\
 & {  \bar{\Delta}_{out}}  =    2 k  D_M H_M  \Big( \sqrt{ \sum_{i=1}^n \frac{1}{  a_i^2} } \Big) [\sigma+   \sqrt{\sum_{i=1}^n a_i^2}]\times \\
 & \qquad  \qquad  \Big[ \sqrt{\varepsilon^*}   \Delta  +    \frac{2 \pi  \sqrt{\sum_{i=1}^n a_i^2} }{(1-\sqrt{\varepsilon^*})D_M}[\frac{ \pi \sqrt{\varepsilon^*}}{(1-\sqrt{\varepsilon^*})D_M}+1]    \Big].
 \end{array}
\end{equation}
Then, for all $\varepsilon \in (0,\varepsilon^*]$ the solutions of \eqref{Eq-derivative-tildetheta-Class} satisfy $|\tilde{\theta}(j)|    < \sigma, \,\, j   \geq  D_M$ and are exponentially attracted to the set
\begin{equation} \label{Ineq-Ultimate-Bound}
\!\!\!\!
\begin{array}{ll}
\Theta = \left\lbrace  \tilde{\theta}(j) \in \mathbb{R} : |\tilde{\theta}(j)|  < \sqrt{\varepsilon }   \left(  \Delta_G     + \frac{   \varepsilon   \Delta_Y  }{1- |1-\varepsilon  k H_m| } \right)   \right\rbrace 
 \end{array}
\end{equation}
with a decay rate {  $|1-\varepsilon k H_m|$.} Moreover, for any $D_M$ and $\sigma_0$, inequalities \eqref{Ineq-Coro-Class-condition-1}, \eqref{Ineq-Coro-Class-condition-2} and \eqref{Ineq-Coro-Class-condition-3} are always feasible for small enough $\varepsilon^*$, $k$ and appropriate $\sigma > \sigma_0$.
\end{theorem}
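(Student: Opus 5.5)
We follow the proof of Theorem \ref{THM-1}, specialized to $\alpha(j)\equiv 1$, $\eta(j)\equiv 0$, $\lambda=0$. The regional bound $|\tilde\theta(j)|<\sigma$ then plays the role of the exponentially weighted bound $|\tilde\theta(j)|<\sigma|1-\varepsilon\lambda|^j$.

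\textbf{Step 1: rewrite the error equation.} Write \eqref{Eq-derivative-tildetheta-Class} in the form \eqref{Eq-dervative-tildetheta-Un-3} with $\alpha\equiv 1$ and $\tilde\eta$ replaced by the constant $-Q^*$. Concretely,
\[
\tilde\theta(j+1)=[1-\varepsilon kH]\tilde\theta(j)+\varepsilon a_1(j)\tilde\theta(j)-\varepsilon a_2(j)Q^*+\varepsilon a_3(j)+\varepsilon a_4(j)|\tilde\theta(j)|_H^2+\varepsilon\bar\Delta_{\sqrt{\varepsilon}}(j).
\]
Here the delay term $\bar\Delta_{\sqrt\varepsilon}$ collects the mismatch between $\tilde\theta(j-D(j))+S(j-D(j))$ and $\tilde\theta(j)+S(j)$, as in \eqref{Eq-diff-Q}.

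\textbf{Step 2: delay-free transformation.} Define $G(j)=\rho_1(j)\tilde\theta(j)-\rho_2(j)Q^*+\rho_3(j)+\rho_4(j)|\tilde\theta(j)|^2_H$ for $j\ge D_M$, and set $z=\tilde\theta-G$. Using $\rho_l(j+1)-\rho_l(j)=\varepsilon a_l(j)$ and Lemma \ref{Lem-Upper-Bounds}, we get $z(j+1)=(1-\varepsilon kH)z(j)+Y(j)$. Compared with \eqref{Eq-Y-delay}, $Y$ contains only the terms $-\varepsilon kHG$, $-\varepsilon\rho_1(j+1)(\Delta+\bar\Delta_{\sqrt\varepsilon})$, $\varepsilon\bar\Delta_{\sqrt\varepsilon}$, and the two $\rho_4$ terms. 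All terms carrying $\bar\lambda^{-1}-1$ or $\omega_h$ vanish, which explains the shorter $\Delta_Y$ in \eqref{Eq-bounds-Coro-1}.

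\textbf{Step 3: contradiction argument.} Assume $|\tilde\theta(j)|<\sigma$ fails. Let $j^*>D_M$ be the first index with $|\tilde\theta(j^*)|\ge\sigma$, so that $|\tilde\theta(j)|<\sigma$ for all $j<j^*$.

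On $[D_M,j^*-1]$ we derive the following bounds.
\begin{itemize}
\item From \eqref{Eq-rho-gamma-bounds} and $|Q^*|\le Q_0+\Delta_Q$, we get $|G(j)|\le\sqrt\varepsilon\Delta_G$.
\item We get $|\Delta(j)|\le\Delta$ using $|y|\le Q_0+\Delta_Q+\sigma_y$ and $|M|\le 2\sqrt{\sum a_i^{-2}}$.
\item The delay mismatch satisfies $|\bar\Delta_{\sqrt\varepsilon}|\le\sqrt\varepsilon\,\bar\Delta_{out}$. Here, as in \eqref{Eq-diff-Q}, at most $D_M$ increments are summed. Each $\tilde\theta$ increment is bounded by $\varepsilon\Delta$. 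Each dither increment is bounded by $\sqrt{\sum a_i^2}\,\frac{2\pi}{(1-\sqrt\varepsilon)D_M}[\frac{\pi\sqrt\varepsilon}{(1-\sqrt\varepsilon)D_M}+1]\sqrt\varepsilon$, which follows from $T\ge nD_M(1/\sqrt\varepsilon-1)$, using \eqref{Ineq-Coro-Class-condition-1}, together with $|\sin x|\le x$ and $1-\cos x\le x^2/2$.
\end{itemize}
Combining these gives $|Y(j)|\le\varepsilon^{3/2}\Delta_Y$.

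By \eqref{Ineq-Coro-Class-condition-2}, $\|1-\varepsilon kH\|\le|1-\varepsilon kH_m|<1$. Hence
\[
|z(j)|\le|1-\varepsilon kH_m|^{j-D_M}|z(D_M)|+\frac{\varepsilon^{3/2}\Delta_Y}{1-|1-\varepsilon kH_m|}.
\]
We also have $|z(D_M)|\le\sigma_0+\sqrt\varepsilon\Delta_G$, since $\tilde\theta(D_M)=\tilde\theta(0)$. Then $|\tilde\theta(j^*)|\le|z(j^*)|+\sqrt\varepsilon\Delta_G$, and this is $<\sigma$ by \eqref{Ineq-Coro-Class-condition-3}, a contradiction. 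The factor $|1-\varepsilon kH_m|^{-D_M}$ in \eqref{Ineq-Coro-Class-condition-3} is a conservative allowance for the initial interval $[0,D_M]$. Monotonicity of all bounds in $\varepsilon$ extends the result from $\varepsilon^*$ to every $\varepsilon\le\varepsilon^*$.

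\textbf{Step 4: ultimate bound and feasibility.} Since the contradiction argument shows the bound holds for all $j$, the $z$ estimate is valid globally. Letting $j\to\infty$ in $|\tilde\theta|\le|z|+\sqrt\varepsilon\Delta_G$ gives exponential attraction to $\Theta$ in \eqref{Ineq-Ultimate-Bound} with rate $|1-\varepsilon kH_m|$.

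For feasibility, fix $\sigma>\sigma_0$. As $\varepsilon^*\to0$, the left side of \eqref{Ineq-Coro-Class-condition-3} tends to $\sigma_0$. The ratio $\varepsilon^{3/2}\Delta_Y/(\varepsilon kH_m)$ is $O(\sqrt\varepsilon)$, and $|1-\varepsilon kH_m|^{-D_M}\to1$. Conditions \eqref{Ineq-Coro-Class-condition-1} and \eqref{Ineq-Coro-Class-condition-2} hold trivially for small $\varepsilon^*$.

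\textbf{Main obstacle.} The main difficulty is Step 3's bound on $\bar\Delta_{\sqrt\varepsilon}$. It must be uniform in the unknown $D(j)$ and yield exactly $\sqrt\varepsilon$ rather than $O(1)$, which relies on $T$ scaling with $D_M/\sqrt\varepsilon$. The delayed states $\tilde\theta(j-D(j))$ must also be covered by the induction hypothesis, including those in $[0,D_M]$, where $|\tilde\theta|\le\sigma_0<\sigma$.
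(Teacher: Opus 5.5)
Your proposal is correct and follows essentially the paper's route. The paper proves Theorem~\ref{THM-2} ``by similar arguments'' to Theorem~\ref{THM-1}, which is exactly your specialization $\alpha\equiv 1$, $\tilde\eta\equiv -Q^*$ of the delay-free transformation $z=\tilde\theta-G$ (so the $\omega_h$ and $\bar\lambda^{-1}-1$ terms drop out of $Y$), followed by variation of constants and a first-exit contradiction; your bounds reproduce $\Delta_G$, $\Delta$, $\bar\Delta_{out}$ and $\Delta_Y$ in \eqref{Eq-bounds-Coro-1} term by term.

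Two fine points are inherited from the paper rather than introduced by you. First, $\|I-\varepsilon kH\|\le|1-\varepsilon kH_m|$ needs $\varepsilon k(H_m+H_M)\le 2$, not just \eqref{Ineq-Coro-Class-condition-2}. Second, the estimate $|G(j^*)|\le\sqrt{\varepsilon}\Delta_G$ at the exit time presupposes $|\tilde\theta(j^*)|\le\sigma$. Both can be repaired from \eqref{Ineq-Coro-Class-condition-3} itself: it forces $\varepsilon kH_M<1$, and it makes $x\mapsto x-\sqrt{\varepsilon}(\bar\rho_1x+\bar\rho_4H_Mx^2)$ increasing up to $\sigma+\varepsilon\Delta$, which together with the one-step bound $|\tilde\theta(j^*)|<\sigma+\varepsilon\Delta$ closes the argument.
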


%{\color{red}
%\begin{remark}
%The following qualitative result is established by Theorem \ref{THM-1}:
%Following Corollary \ref{THM-2}, a qualitative result can be stated: Given any delay bound $D_M$, and parameters $a_i$, $k$, there sufficiently small tuning parameters $\varepsilon$ and $\omega_i=O(\sqrt{\varepsilon})$, $i=1,...,n$ such the closed-loop system \eqref{Eq-hattheta-derivative-Class} is practically stable at the origin.
%\end{remark}
%}

\begin{remark}\label{Rem-Delay-Free-Class}
Similar to Remark \ref{Rem-Delay-Free-Un}, for the delay-free case $D_M=0$,  the classical ES \eqref{Eq-derivative-tildetheta-Class} is applicable with $\omega_i={2\pi i \over 2n+1}$.  Let there exists $\varepsilon^*$ that satisfies inequalities \eqref{Ineq-Coro-Class-condition-2} and \eqref{Ineq-THM-Un-condition-DelayFree} with $\lambda=0$, where $\bar{\rho}_i$, $i=1,2,3,4,$ $\Delta_G$, $\Delta_Y$, $\Delta$, $\sigma_y$ are defined by  \eqref{Eq-rho-gamma-bounds-Delayfree}, \eqref{Eq-bounds-Coro-1} with $D_M=\bar{\Delta}_{out}=0$. Then, for all $\varepsilon \in (0,\varepsilon^*]$, the solutions of \eqref{Eq-derivative-tildetheta-Class} are exponentially attracted to the set
\begin{equation} \label{Ineq-Ultimate-Bound-Delayfree}
\!\!\!\!
\begin{array}{ll}
\Theta = \left\lbrace  \tilde{\theta}(j) \in \mathbb{R} : |\tilde{\theta}(j)|  < \varepsilon   \left(  \Delta_G     + \frac{   \varepsilon   \Delta_Y  }{1- |1-\varepsilon  k H_m| } \right)   \right\rbrace 
\end{array}
\end{equation}
with a decay rate $|1-\varepsilon k H_m|$.
\end{remark}

For completely unknown maps, the following  qualitative stability result follows from Theorem \ref{THM-2}:
\begin{corollary}\label{Coro-2}
Given any $\sigma_0$, $D_M$, and positive tuning parameters $a_i$, $k$, there exist sufficiently small tuning parameters $\varepsilon$ and $\omega_i=O(\sqrt{\varepsilon})$, $i=1,...,n$ such that the estimation error system  \eqref{Eq-derivative-tildetheta-Class} is practically stable.% at the origin.
\end{corollary}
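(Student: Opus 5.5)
The plan is to mirror the unbiased-ES argument of Theorem~\ref{THM-1}, now with $\alpha(j)\equiv 1$, $\eta\equiv 0$ and $\lambda=0$. The estimation error equation \eqref{Eq-derivative-tildetheta-Class} is rewritten in the form \eqref{Eq-dervative-tildetheta-Un-3}. Here the term $a_2(j)\alpha^{-1}(j)\tilde\eta(j)$ becomes $-a_2(j)Q^*$, whose magnitude is bounded by $Q_0+\Delta_Q$ via Assumption~\ref{Assumption-theta}. The delay mismatch $Q(\theta(j-D(j)))-Q(\theta(j))$ is collected in $\varepsilon\bar\Delta_{\sqrt\varepsilon}(j)$, exactly as in \eqref{Eq-diff-Q}. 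We then use the same functions $\rho_l$ from \eqref{Eq-rho}, with the bounds of Lemma~\ref{Lem-Upper-Bounds}, and define $G(j)$ as in \eqref{Eq-G-delay} with $\alpha\equiv1$ and $\tilde\eta$ replaced by $-Q^*$. The transformed variable $z=\tilde\theta-G$ then satisfies $z(j+1)=(1-\varepsilon kH)z(j)+Y(j)$. In $Y(j)$ all terms carrying factors $\bar\lambda^{-1}-1$ or $\bar\lambda-1$ vanish, which explains the shortened $\Delta_Y$ in \eqref{Eq-bounds-Coro-1}.

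The core of the proof is a contradiction (first-exit) argument. Note $|\tilde\theta(j)|\le\sigma_0<\sigma$ for $0\le j\le D_M$. Assume $j^*>D_M$ is the first index with $|\tilde\theta(j^*)|\ge\sigma$. On $[0,j^*-1]$ we then have $|\tilde\theta|<\sigma$, and under this bound we establish, step by step:
\begin{itemize}
\item[(i)] $|G(j)|\le\sqrt\varepsilon\,\Delta_G$, using the $\rho$ bounds;
\item[(ii)] $|\Delta(j)|\le\Delta$, since $|y|\le Q^*+\sigma_y$ and $|M_i|\le 2/|a_i|$;
\item[(iii)] $|\bar\Delta_{\sqrt\varepsilon}(j)|\le\sqrt\varepsilon\,\bar\Delta_{out}$;
\item[(iv)] $|Y(j)|\le\varepsilon^{3/2}\Delta_Y$.
\end{itemize}
Bound (iii) follows by expanding the difference of quadratics into telescoping sums over at most $D_M$ steps. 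Each increment $\tilde\theta(k)-\tilde\theta(k-1)$ is bounded by $\varepsilon\Delta$. Each dither increment is bounded via \eqref{Eq-diff-alpha-S} with $\lambda=0$, using $\sin(2\pi i/T)\le 2\pi i/T$, $1-\cos x\le x^2/2$, and $T\ge nD_M(1/\sqrt\varepsilon-1)$, where the last inequality uses \eqref{Ineq-THM-Un-condition-1}. These give precisely the factors $\frac{2\pi}{(1-\sqrt{\varepsilon^*})D_M}[\cdots]$ appearing in $\bar\Delta_{out}$.

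From (i)--(iv), Assumption~\ref{Assumption-Hessian} and \eqref{Ineq-Coro-Class-condition-2}, we get $\|1-\varepsilon kH\|\le|1-\varepsilon kH_m|<1$. Note that the inequality $\|1-\varepsilon kH\|\le|1-\varepsilon kH_m|$ needs $\varepsilon kH_M\le 2-\varepsilon kH_m$ as well; this should be taken from \eqref{Ineq-Coro-Class-condition-2} together with the smallness of $\varepsilon$. Solving the linear recursion from $j=D_M$ gives
\[
|z(j)|\le|1-\varepsilon kH_m|^{j-D_M}|z(D_M)|+\frac{\varepsilon^{3/2}\Delta_Y}{1-|1-\varepsilon kH_m|}.
\]
Combining this with $|z(D_M)|\le\sigma_0+\sqrt\varepsilon\Delta_G$ and $|\tilde\theta|\le|z|+\sqrt\varepsilon\Delta_G$ yields $|\tilde\theta(j^*)|<\sigma$ by \eqref{Ineq-Coro-Class-condition-3}. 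This contradicts the choice of $j^*$. Since the left side of \eqref{Ineq-Coro-Class-condition-3} is monotone in $\varepsilon$ (each term increases with $\varepsilon$ and $|1-\varepsilon kH_m|^{-D_M}$ does too for small $\varepsilon$), the conclusion holds for all $\varepsilon\in(0,\varepsilon^*]$. The same estimate shows that $\limsup|\tilde\theta|$ is bounded by the radius of $\Theta$, with transient decaying at rate $|1-\varepsilon kH_m|$, which gives the attraction to $\Theta$. For feasibility, let $\varepsilon^*\to0$ with $\sigma>\sigma_0$ fixed: the left side of \eqref{Ineq-Coro-Class-condition-3} tends to $\sigma_0$. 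The last term is $\varepsilon^{3/2}\Delta_Y/(\varepsilon kH_m)=O(\sqrt\varepsilon)$, since $\Delta_Y$ stays bounded as $\varepsilon^*\to0$ because $\bar\Delta_{out}$ does.

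The main obstacle is step (iii), the uniform $O(\sqrt\varepsilon)$ bound on the unknown time-varying delay mismatch. It must be obtained without any smallness of $D_M$, relying solely on the $\omega_i=O(\sqrt\varepsilon)$ scaling. It must also be set up so that the bounds used on $[j-D_M,j]$ are already covered by the first-exit hypothesis, which requires the dither-increment constants to be tracked carefully.
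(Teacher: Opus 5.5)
Your route matches the paper's. The corollary comes from the feasibility part of Theorem~\ref{THM-2}, and that theorem's proof repeats the proof of Theorem~\ref{THM-1} with $\alpha\equiv1$ and $\tilde\eta\equiv-Q^*$: the same $\rho_l$, $G$, $z=\tilde\theta-G$, the bounds on $G$, $\bar\Delta_{\sqrt\varepsilon}$ and $Y$, and a first-exit argument. Your limit $\varepsilon^*\to0$ with $\sigma>\sigma_0$ fixed, together with the $O(\sqrt\varepsilon)$ radius of $\Theta$, is what gives practical stability. Your observation that \eqref{Ineq-Coro-Class-condition-2} alone does not give $\|I-\varepsilon kH\|\le|1-\varepsilon kH_m|$ is correct: one also needs $\varepsilon k(H_m+H_M)\le 2$. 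For this qualitative statement, taking $\varepsilon$ small absorbs it.

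One step does not follow as written; the paper's proof of Theorem~\ref{THM-1} skips it in the same way. You establish (i) only for indices up to $j^*-1$. The contradiction, however, uses $|\tilde\theta(j^*)|\le|z(j^*)|+|G(j^*)|$ together with $|G(j^*)|\le\sqrt\varepsilon\,\Delta_G$. Since $G(j^*)=\rho_1(j^*)\tilde\theta(j^*)-\rho_2(j^*)Q^*+\rho_3(j^*)+\rho_4(j^*)|\tilde\theta(j^*)|_H^2$ contains $\tilde\theta(j^*)$ itself, that bound presupposes $|\tilde\theta(j^*)|<\sigma$, which is exactly what you are trying to show. In discrete time no continuity forces $|\tilde\theta(j^*)|=\sigma$ at the exit index, so $\tilde\theta(j^*)$ could a priori overshoot.

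The bound on $z(j^*)$ is fine, because $Y(j^*-1)$ uses the coefficients $\rho_l(j^*)$ but only states up to index $j^*-1$. The repair is a one-step a priori estimate: $|\tilde\theta(j^*)|\le|\tilde\theta(j^*-1)|+\varepsilon k|M(j^*-1)|\,|y(j^*-1)|<\sigma+\varepsilon\Delta$. Hence $|G(j^*)|\le\sqrt\varepsilon\big[\bar\rho_1(\sigma+\varepsilon\Delta)+\bar\rho_2(Q_0+\Delta_Q)+\bar\rho_3+\bar\rho_4H_M(\sigma+\varepsilon\Delta)^2\big]$. Using this slightly larger constant for the $G(j^*)$ term changes the left-hand side of \eqref{Ineq-Coro-Class-condition-3} only by $O(\varepsilon^{3/2})$. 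Your feasibility argument as $\varepsilon^*\to0$ therefore goes through unchanged, and the corollary follows.
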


\section{Example}\label{Sec-Example}

 Consider the $3D$ map \eqref{Eq-Quadratic-Form} with
\begin{equation}\label{Eq-MAP}
\theta^*= [ 2 \quad  4 \quad  1]^T, \quad Q_0=0, \quad  H=   \begin{bmatrix}
  100 & 30 & 5 \\
  30 & 20 & 5  \\
   5 & 5 & 50
  \end{bmatrix}.
\end{equation}
Note that the eigenvalues of $H$ are $9.749$ and $110.66$. Also notice that this map with $\theta^*_3=1$ coincides with the $2$D map in \cite{guay2014time}. We assume that $Q^*$ and $H$ are uncertain satisfying Assumptions \ref{Assumption-Hessian}, \ref{Assumption-theta} with a larger interval for $H$
\begin{equation}\label{Eq-Case-Parameters}
\Delta_Q = 1, \qquad H_m=9.5, \qquad H_M=111.
\end{equation}
%We select the tuning parameters of the ES algorithm as $a = 0.5$ and $k=0.05$. %Let $D(j)$ be an unknown time-varying bounded delay satisfying \eqref{Eq-Delay-Cond} with known bound $D_M$ (will be selected later).time-varying delay $D(j) = D_M \cdot \text{randi}([0, 1])$, where $\text{randi}([a, b])$ is a MATLAB function that generates a random integer from a uniform distribution between $a$ and $b$.
%Let $D(j)$ be an unknown time-varying delay satisfying \eqref{Eq-Delay-Cond} with a known bound $D_M$ (to be selected later), chosen as $D(j) = \text{randi}([0, D_M ])$, where $\text{randi}([a, b])$ is a MATLAB function that generates a random integer from a uniform distribution between $a$ and $b$.
  We further present  results that follow from Theorems \ref{THM-1} and \ref{THM-2}, where the parameter $\sigma$ is tuned to achieve as large as possible $\varepsilon^*$.
  We also find maximum $\epsilon^*$ that follows from simulations, where we choose $D(j) = \text{randi}([0, D_M])$ with % subject to $D(j)\le D_M$. Here,
MATLAB function $\text{randi}([0, D_M])$ generating a random integer from a discrete uniform distribution on $[0, D_M]$.

\underline{\textit{ES with exponential stability:}}
Consider %the estimation errors \eqref{Eq-dervative-tildetheta-Un-1}, \eqref{Eq-dervative-tildeeta-Un} with
\begin{subequations}
\begin{equation}\label{Eq-Example-Para-1}
\sigma_0=1, \,\, a = 0.1, \,\,  k=0.005,
 \end{equation}
 \begin{equation}\label{Eq-Example-Para-2}
\lambda=0.005,\,\, \omega_h=0.015.
\end{equation}
\end{subequations}
%for map \eqref{Eq-Quadratic-Form}, \eqref{Eq-MAP}. 
The maximum values of $\varepsilon^*$ that guarantee the stability of  the estimation error system \eqref{Eq-dervative-tildetheta-Un-1}, \eqref{Eq-dervative-tildeeta-Un} are presented in Table \ref{Table-3D-Un}, comparing the analytical bounds derived from Theorem \ref{THM-1} with the simulation-based results.
\begin{table}[h]
 \centering
 \scalebox{0.59} {  \begin{tabular}{ | c | c | c | c | c | c |     }
 		\hline
 		Cases   &  $D_M$   &  $\sigma$ &    $\varepsilon^*$ &  $\bar{\lambda} = 1- \varepsilon^*\lambda$   \\ [1mm]
 		\hline
 		 \multirow{3}{*}{Theorem \ref{THM-1} with $H_m$, $H_M$, $\Delta_Q$ from \eqref{Eq-Case-Parameters}}
 & \multicolumn{1}{c|}{0}    & \multicolumn{1}{c|}{1.6}  & \multicolumn{1}{c|}{$0.43 \cdot 10^{-5}$}  & \multicolumn{1}{c|}{ $1-0.21 \cdot 10^{-7}$ }  \\
 \cline{2-5}
 & \multicolumn{1}{c|}{5}   & \multicolumn{1}{c|}{1.7}  & \multicolumn{1}{c|}{$0.36  \cdot 10^{-11}$  }  & \multicolumn{1}{c|}{ $1-0.16 \cdot 10^{-13}$ }  \\
 \cline{2-5}
 & \multicolumn{1}{c|}{50}   & \multicolumn{1}{c|}{1.7}  & \multicolumn{1}{c|}{$0.3 \cdot 10^{-13}$  }  & \multicolumn{1}{c|}{ $1-0.15 \cdot 10^{-15}$ }  \\
\hline
 		 \multirow{3}{*}{Simulation}
 & \multicolumn{1}{c|}{0}    & \multicolumn{1}{c|}{-}  & \multicolumn{1}{c|}{$0.7 \cdot 10^{-3}$}  & \multicolumn{1}{c|}{ $1-0.35 \cdot 10^{-5}$ }  \\
 \cline{2-5}
 & \multicolumn{1}{c|}{5}   & \multicolumn{1}{c|}{-}  & \multicolumn{1}{c|}{$0.3 \cdot 10^{-4}$  }  & \multicolumn{1}{c|}{ $1-0.15 \cdot 10^{-6}$ }  \\
 \cline{2-5}
 & \multicolumn{1}{c|}{50}   & \multicolumn{1}{c|}{-}  & \multicolumn{1}{c|}{$0.3 \cdot 10^{-5}$  }  & \multicolumn{1}{c|}{ $1-0.15 \cdot 10^{-7}$ }  \\
\hline
 		
\end{tabular} }

\caption{Maximum $\varepsilon^*$ for unbiased ES  with \eqref{Eq-Example-Para-1}, \eqref{Eq-Example-Para-2}.}   \label{Table-3D-Un}

% \caption{Unbiased ES: Maximum $\varepsilon^*$ for $\sigma_0=1$.} \label{Table-Delay}
 %\setlength{\extrarowheight}{2pt}	

  \end{table}

% The maximum values of $\varepsilon^*$ that follows once from Simulations and once from Theorem \ref{THM-1} for uncertain $Q^*$, $H$, and known $D_M$ are shown in Table \ref{Table-3D-Un}:

%Note that in the next obtained results from Theorems \ref{THM-1} and \ref{THM-2}, the parameter $\sigma$ is tuned to achieve as large as possible $\varepsilon^*$.

  %\vspace*{-\baselineskip} %omit space

%We further provide simulation of the  unbiased ES algorithm \eqref{Eq-dervative-tildetheta-Un-1}, \eqref{Eq-dervative-tildeeta-Un} for map \eqref{Eq-Quadratic-Form}, \eqref{Eq-MAP} with an initial condition $\theta(0) = [1.3,3.5,0.5]^T$, a time-varying delay $D(j)=  randi([0\,\,\,  5])$ ($randi[a \,\, b]$ is an Matlab function which generates a random integer from the uniform distribution between $a$ and $b$), $k=\lambda=0.05$ and $\omega_h= 0.15$. According to the simulations, $\varepsilon^*$ is $20$ with a decay rate $\bar{\lambda} =1-1\cdot 10^{-4} = 0.9999$.
%The plot of $|\hat{\theta}(t)-\theta^*|$ with initial condition $\theta(0) = [1.3,3.5,0.5]^T$( i.e. $\sigma_0=1$) and $D_M=0,5$, $\varepsilon^*=0.5 \cdot 10^{-2}, 0.8 \cdot 10^{-5}$ recpectivly is presented in Fig. \ref{Fig-DUnES}, where it can be seen that $\hat{\theta}(t)$ converges exponentially to the extremum point $\theta^*$.
The trajectories of $|\hat{\theta}(t)-\theta^*|$ for the initial condition $\hat{\theta}(0) = [1.3, 3.5, 0.5]^T$ (i.e., $\sigma_0=1$) and tuning parameters \eqref{Eq-Example-Para-1}, \eqref{Eq-Example-Para-2} with $(D_M, \varepsilon^*) = (0, 0.7 \cdot 10^{-3})$ and $(5, 0.3 \cdot 10^{-4})$ are shown in Fig. \ref{Fig-DUnES}, demonstrating exponential convergence of $\hat{\theta}(t)$ to $\theta^*$. While this confirms the efficiency of the unbiased ES algorithm, it also mirrors the conservatism of the analytical bounds on $\varepsilon^*$.
%is presented in Fig. \ref{Fig-DUnES} where it can be seen that $\hat{\theta}(t)$ exponentially converges to the extremum point $\theta^*$. This confirms the efficiency of the unbiased ES algorithm but also suggests that our theoretical stability bound on $\varepsilon^*$ as appeared in Table \ref{Table-3D-Un} is quite conservative compared to actual performance.

%. It is seen that ${\theta}(t)$ exponentially converges to the extremum point ${\theta}^*$, which demonstrates the efficiency of the method. In addition, this shows the consrviatsim of our quantitive results.

%Fig. \ref{Fig-DUnES} illustrates the evolution of $|\theta(t)-\theta^*|$, demonstrating the exponential convergence of $\theta(t)$ to the optimal point $\theta^*$ and confirming the effectiveness of the control scheme.

\begin{figure}[h!]
\includegraphics[width=8.5cm, height=6cm]{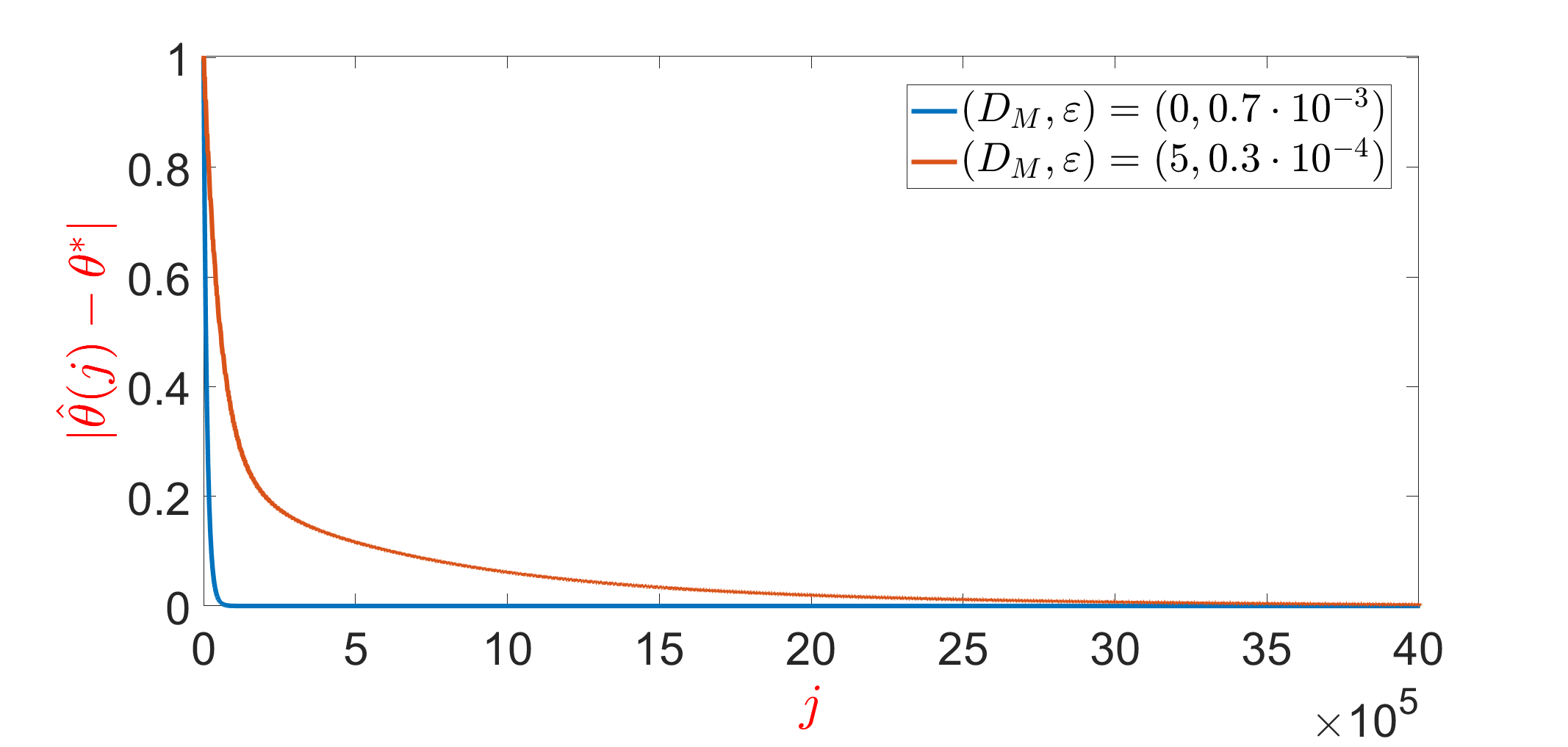}
\centering
\caption{Unbiased ES \eqref{Eq-dervative-tildetheta-Un-1}, \eqref{Eq-dervative-tildeeta-Un} with  \eqref{Eq-Example-Para-1}, \eqref{Eq-Example-Para-2} for $(D_M, \varepsilon^*) = (0, 0.7 \cdot 10^{-3})$ and $(5, 0.3 \cdot 10^{-4})$.
}\label{Fig-DUnES}
\end{figure}

\underline{\textit{ES with practical stability:}} Consider $\sigma_0$, $a$ and $k$ given by \eqref{Eq-Example-Para-1}. The maximum values of $\varepsilon^*$ for the estimation errors \eqref{Eq-derivative-tildetheta-Class} are presented in Table \ref{Table-3D-Cl}, comparing analytical bounds that follow from Theorem \ref{THM-2} with simulation-based results.

%\begin{table}[H]
 %	\centering
 %  \small
% \setlength{\extrarowheight}{2pt}
% \scalebox{0.7} { 	\begin{tabular}{ | c | c | c | c | c | c |      }
 %		\hline
 %		Cases &       $\sigma$ &  $\delta$ &   $\varepsilon^*$   &    UB \\ [1mm]
 %		\hline
 %		   Corollary 1 in \cite{yang2023time},   $\Delta_Q=0$, $H_m=H_M = 2$      &    $\sqrt{2}$   &      $0.013$   &    $0.079$  &  $ - $       \\[1mm]
 %		\hline
 %		Corollary \ref{Corollary-1} (our result),   $\Delta_Q=0$, $H_m=H_M = 2$  & $1.6$ &    $0.013$  &    $0.4805$ &  $0.5737$         \\[1mm]
 %		\hline
 %		Corollary 1 in \cite{yang2023time},   $\Delta_Q=0.1$, $H_m=1.9$,  $H_M = 2.1$   &    $\sqrt{2}$   &    $0.012$   &    $0.072$  &  $ - $       \\[1mm]
 %		\hline
% 		Corollary \ref{Corollary-1} (our result),   $\Delta_Q=0.1$, $H_m=1.9$,  $H_M = 2.1$   & $1.6$  &    $0.013$   &    $0.4053$ &  $0.5759$        \\[1mm]
 %		\hline
 %		 Corollary 1 in \cite{yang2023time},   $\Delta_Q=1$, $H_m=1.6$,  $H_M = 7.9$   &    $\sqrt{2}$  &    $0.010$   &    $0.018$  &  $ 5.3 \cdot 10^{-3}$         \\[1mm]
% 		\hline
% 		Corollary 1 (our result),   $\Delta_Q=1$, $H_m=1.6$,  $H_M = 7.9$   & $1.6$ &    $0.013$   &  $0.0245$   &    $0.5942$     \\[1mm] % 0.0245 $0.0263$
% 		\hline
% 	\end{tabular}
% 	}
 %	\medskip
% \caption{Comparison of maximum $\varepsilon^*$ for $\sigma_0=1$ and $D=0$.}\label{Table-Bound}

% 	\normalsize
 	
% \end{table}

%

 \begin{table}[h]
 \centering
 \scalebox{0.59} {  \begin{tabular}{ | c | c | c | c | c | c |     }
 		\hline
 		Cases   &  $D_M$   &  $\sigma$ &    $\varepsilon^*$ &  $\bar{\lambda} = 1- \varepsilon^*kH_m$   \\ [1mm]
 		\hline
 		 \multirow{3}{*}{Theorem \ref{THM-2} with $H_m$, $H_M$, $\Delta_Q$ from \eqref{Eq-Case-Parameters}}
 & \multicolumn{1}{c|}{0}    & \multicolumn{1}{c|}{1.6}  & \multicolumn{1}{c|}{$0.19 \cdot 10^{-4}$}  & \multicolumn{1}{c|}{ $1-0.9 \cdot 10^{-6}$ }  \\
 \cline{2-5}
 & \multicolumn{1}{c|}{5}   & \multicolumn{1}{c|}{1.6}  & \multicolumn{1}{c|}{$0.66  \cdot 10^{-10}$  }  & \multicolumn{1}{c|}{ $1-0.31 \cdot 10^{-11}$ }  \\
 \cline{2-5}
 & \multicolumn{1}{c|}{50}   & \multicolumn{1}{c|}{1.6}  & \multicolumn{1}{c|}{$0.63 \cdot 10^{-12}$  }  & \multicolumn{1}{c|}{ $1-0.3 \cdot 10^{-13}$ }  \\
\hline
 		 \multirow{3}{*}{Simulation}
 & \multicolumn{1}{c|}{0}    & \multicolumn{1}{c|}{-}  & \multicolumn{1}{c|}{$0.2 \cdot 10^{-2}$}  & \multicolumn{1}{c|}{ $1-0.95 \cdot 10^{-4}$ }  \\
 \cline{2-5}
 & \multicolumn{1}{c|}{5}   & \multicolumn{1}{c|}{-}  & \multicolumn{1}{c|}{$0.2 \cdot 10^{-3}$  }  & \multicolumn{1}{c|}{ $1-0.95 \cdot 10^{-5}$ }  \\
 \cline{2-5}
 & \multicolumn{1}{c|}{50}   & \multicolumn{1}{c|}{-}  & \multicolumn{1}{c|}{$0.8 \cdot 10^{-5}$  }  & \multicolumn{1}{c|}{ $1-0.38 \cdot 10^{-6}$ }  \\
\hline
 		
 	\end{tabular}
 	}
 	\medskip
 \caption{Maximum $\varepsilon^*$ for classical ES \eqref{Eq-derivative-tildetheta-Class} with \eqref{Eq-Example-Para-1}}\label{Table-3D-Cl}

 	\normalsize
 	
 \end{table}

%In addition, we provide a simulation of the classical ES algorithm \eqref{Eq-derivative-tildetheta-Class} for map \eqref{Eq-Quadratic-Form}, \eqref{Eq-MAP}, with the same initial condition, time-varying delay, and tuning parameters as previously. While the simulations yield $\varepsilon^* = 150$ with a decay rate $\bar{\lambda} = 1-\varepsilon kH_m \approx 0.992$, it also highlights the conservatism of our theoretical stability bound on $\varepsilon^*$. As shown in Fig. \ref{Fig-DClassES}, $\hat{\theta}(t)$ converges to a small neighborhood of the extremum point $\theta^*$. The convergence requires approximately $400$ iterations which is consistent with established results in the literature, such as \cite{discrete-LQ-control-linear-extremum-seeking-Krstic}.

The trajectories of $|\hat{\theta}(t)-\theta^*|$ for the initial condition $\hat{\theta}(0) = [1.3, 3.5, 0.5]^T$ (i.e., $\sigma_0=1$) and tuning parameters \eqref{Eq-Example-Para-1}, with $(D_M, \varepsilon^*) = (0, 0.2 \cdot 10^{-2})$ and $(5, 0.2 \cdot 10^{-3})$, are shown in Fig. \ref{Fig-DClassES}. As illustrated, $\hat{\theta}(t)$ converges to a small neighborhood of $\theta^*$. Convergence requires approximately $50 \cdot 10^4$ iterations, consistent with literature such as \cite{discrete-LQ-control-linear-extremum-seeking-Krstic, ES-Distributed-Optimization2025}. Furthermore, Table \ref{Table-3D-Cl} highlights the conservatism of our theoretical  bound $\varepsilon^*$ for classical ES.

\begin{figure}[h!]
\includegraphics[width=8.5cm, height=6cm]{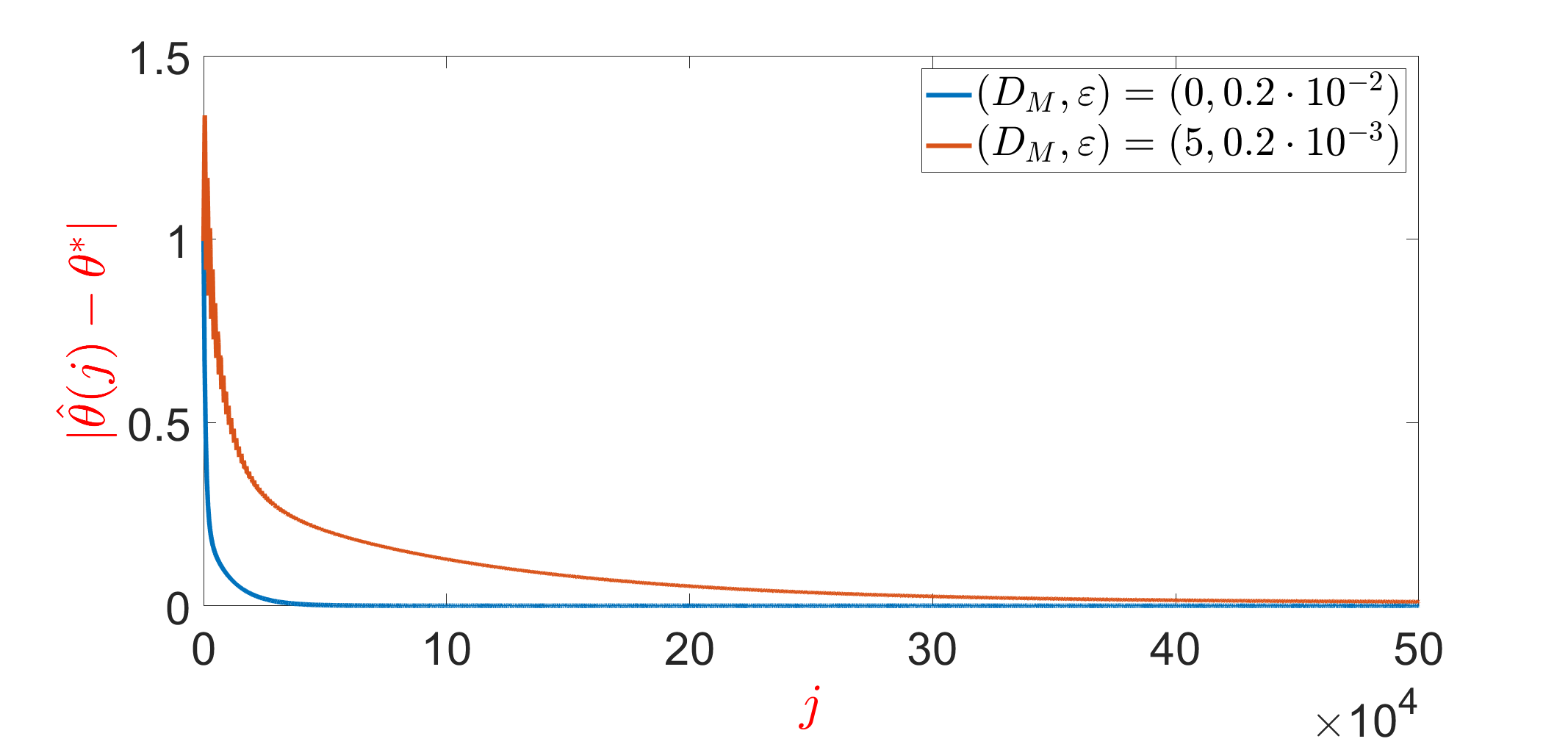}
\centering
\caption{Classical ES \eqref{Eq-derivative-tildetheta-Class} with \eqref{Eq-Example-Para-1} for $(D_M, \varepsilon^*) = (0, 0.2 \cdot 10^{-2})$ and $(5, 0.2 \cdot 10^{-3})$. %A zoom plot is also provided.
}\label{Fig-DClassES}
\end{figure}

From Tables \ref{Table-3D-Un}-\ref{Table-3D-Cl}, it is seen that larger delay bound $D_M$ lead to a smaller $\varepsilon^*$ and smaller decay rate $\bar{\lambda}$, i.e. higher dither frequencies and a slower convergence (as demonstrated in Figs. \ref{Fig-DUnES} and \ref{Fig-DClassES}). Note that the values of $ \varepsilon^*$  obtained by the unbiased ES algorithm (both the ones that follow from Theorems and  from simulations) are smaller than in the classical ES. 

In practice, the difference between the theoretical and simulation results suggests a two-step tuning procedure. First, Theorems \ref{THM-1} or \ref{THM-2} provide tuning parameters that maximize $\varepsilon^*$, bypassing exhaustive simulations. Subsequently, with these tuning parameters fixed, $\varepsilon^*$ can be further %incrementally 
enlarged
 via simulations to optimize performance before online implementation. This strategy utilizes theoretical results for safe initialization while achieving faster convergence and better performance.

\section{Conclusion}
This paper presented the first ES algorithm, which is robust with respect to arbitrary large unknown time-varying bounded delay. We presented discrete-time unbiased ES algorithm for uncertain $n$D quadratic maps in the presence of unknown time-varying bounded delays. % via the delay-free transformation.
%We established explicit, constructive conditions in the form of scalar linear inequalities that guarantee the exponential stability of the ES control system. Furthermore, we establish constructive conditions for the practical stability of the classical ES system. Our results are semi-global for globally quadratic maps, whereas for locally quadratic static maps, we provide a bound on the region of convergence. The constructive nature of our conditions is significant, as it allows for the selection of appropriate ES parameters to achieve convergence for any large unknown time-varying delay bounded by a known constant.
 Robustness is achieved due to the choice of 
 dithers with  frequencies of the order of $\sqrt{\varepsilon}$, where
the small parameter $\varepsilon>0$ appears in the dynamics of the real-time estimator. Larger delays lead to a slower convergence. 
 
 A potential direction for future research is the development of constructive methods for unbiased ES of dynamic systems in the presence of time-varying delays.

%%%%%%%%%%%%%%%%%%%%%%%%%%%%%%%%%%%%%%%%%%%%%%%%%%%%%%%%%%%%%%%%%%%%%%%%%%%%%%%%%%%%%%%%%%%%%%%%%%%%%%%%%%%%%%%%%%%%%%%%%%%%%%%%%%%%%%%%%%%%%%%%%%%%%%%%%%%%%%%%%%%%%%%%%%%%%%%%%%%%%%%%%%%%%%%%

\section{Appendix}

%%%%%%%%%%%%%%%%%%%%%%%%%%%%%%%%%%%%%%%%%%%%%%%%%%%%%%%%%%%%%%%%%%%%%%%%%%%%%%%%%%%%%%%%%%%%%%%%%%%%%%%%%%%%%%%%%%%%%%%%%%%%%%%%%%%%%%%%%%%%%%%%%%%%%%%%%%%%%%%%%%%%%%%%%%%%%%%%%%%%%%%%%%%%%%%%

{\bf Proof of Lemma \ref{Lem-Upper-Bounds}:}
The function $\rho_{1}(j)$ can be rewritten as
\begin{equation}\label{Eq-rho1-1}
 \!\!\!\! \!\!\!\!
\begin{array}{ll}
& \rho_{1}(j)  =  k \Big( \frac{\varepsilon}{T} \sum_{s=j}^{j+ T-1} (j+T-s)   [M(s)S^T(s) -  I]  \Big) H,
\end{array}
\end{equation}
with
%\begin{equation}\label{Eq-rho1-2}
%\!\!\!\! \!\!\!\!\!\!
%\begin{array}{ll}
%&\frac{1}{\varepsilon} \int_{t}^{t+ \varepsilon} (t + \varepsilon-s)  [M(s)S^T(s) - I] ds \\
%&=  - \sum_{i =1}^n    {   \left( \frac{1}{\varepsilon} \int_{t}^{t+ \varepsilon}(t + \varepsilon-s)   \cos( 2 \omega_i s)  ds \right)}  e_ie_i^T \\
%& \quad +  \sum_{1 \leq i \neq j \leq n}  \frac{2a_j}{a_i}  \\
%& \qquad  \times \left( \frac{1}{\varepsilon} \int_{t}^{t+ \varepsilon}(t + \varepsilon-s)  \sin(\omega_is) \sin(\omega_js)  ds \right)  e_ie_j^T.
%\end{array}
%\end{equation}

\begin{equation}\label{Eq-rho1-2}
\!\!\!\!\!\!\!\!
\begin{array}{ll}
& \frac{\varepsilon}{T} \sum_{s=j}^{j+ T-1} (j+T-s)  [M(s)S^T(s) -  I]   \\
&=    \sum_{i=1 }^n    \Big( \frac{\varepsilon}{T} \sum_{s=j}^{j+ T-1} (j+T-s)  \cos(2 \omega_i s) \Big) e_{i}^T e_{i} \\
%& +  \sum_{i \neq l}^n  \frac{a_l}{a_i}   \Big( \frac{\varepsilon}{T} \sum_{s=j}^{j+ T-1} (j+T-s)  \cos(  [\omega_i-\omega_l] s) \Big)   e_{i}^T e_{l}   \\
%& - \sum_{i \neq l}^n  \frac{a_l}{a_i}    \Big( \frac{\varepsilon}{T} \sum_{s=j}^{j+ T-1} (j+T-s)  \cos(  [\omega_i+\omega_l] s) \Big)   e_{i}^T e_{l} \\
& +  \sum_{i \neq l}^n  \frac{a_l}{a_i}   \Big( \frac{\varepsilon}{T} \sum_{s=j}^{j+ T-1} (j+T-s) \\
& \qquad \quad  \times \big( \cos(  [\omega_i-\omega_l] s) - \cos(  [\omega_i+\omega_l] s) \Big) \Big)   e_{i}^T e_{l}.
\end{array}
\end{equation}
Using trigonometric identities and summation by parts we obtain
\begin{equation}\label{Ineq-rho1-3}
 \!\!\!\! \!\!\!\!\!\!
\begin{array}{ll}
& || \frac{\varepsilon}{T} \sum_{s=j}^{j+ T-1} (j+T-s)  [M(s)S^T(s) -  I]   || \\
& \leq   0.19245   n D_M     \Big[ \sum_{i=1 }^n   \frac{1 }{ i } +   \sum_{i \neq l}^n  \frac{a_l}{a_i}  [   \frac{ 1 }{ |i-l| }   +   \frac{ 1 }{ i+l }  ]  \Big]  \sqrt{\varepsilon}.
\end{array}
\end{equation}
Thus, from \eqref{Eq-rho1-1}-\eqref{Ineq-rho1-3}, we get
\begin{equation}\label{Ineq-rho1-4}
 ||\rho_{1}(j)||   \leq   \sqrt{\varepsilon} \bar{\rho}_1, \quad j \geq 0,
\end{equation}
with $\bar{\rho}_1$ defined in \eqref{Eq-rho-gamma-bounds}. Also, the function $\rho_{2}(j)$ can be rewritten as
\begin{equation}\label{Eq-rho2-1}
	\rho_{2}(j)  =    \sum_{i=1}^n \frac{2k}{a_i}  \left[ - \frac{\varepsilon}{T} \sum_{s=j}^{j+T-1}   (j+T-s)  \sin( \omega_i s)  \right]  e_i.
\end{equation}
Using summation by parts, we have
\begin{equation}\label{Eq-rho2-2}
\!\!\!\!\!\!\! \!\!
\begin{array}{ll}
&  - \frac{\varepsilon}{T} \sum_{s=j}^{j+T-1}   (j+T-s)  \sin( \omega_i s)     = -\frac{  \varepsilon \cos \left(  \frac{(2j-1)\omega_i}{2}\right) }{ 2   \sin (\frac{\omega_i}{2})}.
\end{array}
\end{equation}
Thus, from \eqref{Eq-rho2-1} and \eqref{Eq-rho2-2}, we get
\begin{equation}\label{Ineq-rho2-3}
 |\rho_{2}(j)|   \leq   \sqrt{\varepsilon}  \bar{\rho}_2, \quad j \geq 0,
\end{equation}
with $\bar{\rho}_2$  defined in \eqref{Eq-rho-gamma-bounds}. %In a similar manner to $\rho_2(j)$, an upper bound for $\rho_4(j)$ can be determined.
An upper bound on $\rho_4(j)$ can be derived similarly to the bound on  $\rho_2(j)$.

The function $\rho_{3}(j)$ can be rewritten as
\begin{equation}\label{Eq-rho3-1}
\!\!\!\!\!\!\!\!\!\!
\begin{array}{ll}
&\rho_{3}(j) =     \sum_{i=1}^n \frac{k}{a_i}  \Big[ \frac{\varepsilon}{T}  \sum_{s=j}^{j+T-1}(j+T-s) \\
& \qquad \qquad \qquad  \qquad  \quad   \times \sin( \omega_i s) |S(s)|_{H}^2 \Big]    e_i.
\end{array}
\end{equation}
Using discrete Cauchy-Schwarz inequality, we obtain
\begin{equation}\label{Ineq-rho3-2}
\!\!\!\!\!\!\! \!\!\!
\begin{array}{ll}
 &  | \frac{\varepsilon}{T}  \sum_{s=j}^{j+T-1}(j+T-s)\sin( \omega_i s) |S(s)|_{H}^2 |  \\
 & \leq  \left[ \frac{\varepsilon}{T} \sum_{s=j}^{j+T-1}(j+T-s)^2 \sin^2( \omega_i s) \right]^{\frac{1}{2}}   \\
 & \qquad \qquad \quad \times \left[   \frac{\varepsilon}{T} \sum_{s=j}^{j+T-1} |S^T(s)HS(s)|^2   \right]^{\frac{1}{2}}.
\end{array}
\end{equation}
Repeated summation by parts yields the following bound for the first term on the right side of \eqref{Ineq-rho3-2}:
\begin{equation}\label{Ineq-rho3-3}
\!\!\!\!\!\!\!\!\!\!\!
\begin{array}{ll}
& |{  \frac{1}{\varepsilon} \int_{t}^{t+ \varepsilon} (t + \varepsilon-s)^2 \sin^2(\omega_i s)   ds } |   \leq     \\
&  \qquad  \frac{ ( n  D_M + \sqrt{\varepsilon})(2n  D_M    + \sqrt{\varepsilon})}{12}  \\
&  \qquad \qquad \quad  + \frac{ \sqrt{\varepsilon} ( \sqrt{\varepsilon} + n D_M   )  + D_M^2 n^2( 1+   \frac{ 1 }{3.3072 \pi  i }   )  }{3.3072 \pi i }  .
\end{array}
\end{equation}
The second term on the right side of \eqref{Ineq-rho3-2} can be bounded as
\begin{equation}\label{Ineq-rho3-4}
\!\!\!\!\!\!\!
\begin{array}{ll}
\frac{\varepsilon}{T} \sum_{s=j}^{j+T-1} (S^T(s)HS(s))^2   \leq     \varepsilon H_M^2  \left( \sum_{i=1}^{n}a_i^2\right) ^2.
\end{array}
\end{equation}
Finally, from \eqref{Eq-rho3-1}-\eqref{Ineq-rho3-4} we have
%\begin{equation}\label{Ineq-rho3-5}
 $|\rho_{3}(j)|   \leq   \sqrt{\varepsilon}  \bar{\rho}_3$
%\end{equation}
with $\bar{\rho}_3$ defined in \eqref{Eq-rho-gamma-bounds}.
$\hfill \Box$

%%%%%%%%%%%%%%%%%%%%%%%%%%%%%%%%%%%%%%%%%%%%%%%%%%%%%%%%%%%%%%%%%%%%%%%%%%%%%%%%%%%%%%%%%%%%%%%%%%%%%%%%%%%%%%%%%%%%%%%%%%%%%%%%%%%%%%%%%%%%%%%%%%%%%%%%%%%%%%%%%%%%%%%%%%%%%%%%%%%%%%%%%%%%%%%%

{ \textit{ Proof of Theorem \ref{THM-1}: }}
%\begin{proof}[Proof of Theorem \ref{THM-delay-1}]
Given $\sigma_0>0$, let $\sigma$ be subject to $\sigma_0< \sigma$. From \eqref{Eq-dervative-tildetheta-Un-1}, we have
\begin{equation} \label{Ineq-tildetheta-sigma_0-delay}
\!\!\!\!\!
\begin{array}{ll}
|\tilde{\theta}(j)|  = | {\theta}(0)-\theta^*  |  \leq  \sigma_{0}  \underbrace{ < }_{\eqref{Ineq-THM-Un-condition-3}} \sigma |\bar{\lambda}|^{D_M} \leq \sigma |\bar{\lambda}|^{  j },
\end{array}
\end{equation}
for $0 \leq j \leq D_M$. We first assume (and further prove) that
\begin{equation} \label{Ineq-tildetheta-sigma-delay}
	|\tilde{\theta}(j) | < \sigma |\bar{\lambda}|^{  j}, \quad   \forall j > D_M.
\end{equation}
Then, from \eqref{Ineq-tildetheta-sigma_0-delay} and \eqref{Ineq-tildetheta-sigma-delay}, we have
\begin{equation}\label{Ineq-tildetheta-sigma-all-delay}
|\tilde{\theta}(j)| < \sigma |\bar{\lambda}|^j,  \quad   j  \geq 0.
\end{equation}
By using \eqref{Ineq-tildetheta-sigma-all-delay}, it follows from \eqref{Eq-Quadratic-Form}, \eqref{Eq-y-Q} and \eqref{Eq-bounds-THM-1} that
\begin{equation} \label{Ineq-y-Q*-delay}
\!\!\!\!\!\! \!\!\!
\begin{array}{ll}
&  |y(j)-Q^*|  =   \frac{1}{2} [\tilde{\theta}( j - D(j)) \\
 &  \qquad \qquad \qquad \qquad  + \alpha(j- D(j))S(j- D(j))|_H^2   \\
 & \qquad \quad \leq  \frac{H_M}{2}  \left[  \sigma |\bar{\lambda}|^{ j - D_M} + \alpha_0 \sqrt{\sum_{i=1}^na_i^2} |\bar{\lambda}|^{ j - D_M} \right] ^2   \\
% & \qquad \qquad \leq  \frac{H_M}{2}  \left[  \sigma   + \alpha_0 \sqrt{\sum_{i=1}^na_i^2} \right]^2 |\bar{\lambda}| ^{ 2j - 2D_M}   \\
% & \qquad \quad \leq  \frac{H_M}{2} |\bar{\lambda}| ^{  - 2D_M}  \left[  \sigma   + \alpha_0 \sqrt{\sum_{i=1}^na_i^2} \right]^2 |\bar{\lambda}| ^{ 2j  }   \\
 &  \qquad \quad \leq \sigma_{y} |\bar{\lambda}|^{2 j}, \qquad \qquad \qquad  j \geq D_M.
  \end{array}
\end{equation} 
We use the variation of constants formula for equation \eqref{Eq-dervative-tildeeta-Un} to obtain
\begin{equation} \label{Eq-tildeeta-sol-delay}
\!\!\!\!\!\!\!\!
\begin{array}{ll}
	& \tilde{\eta}(j) = [1-\varepsilon\omega_h]^{j-D_M}\tilde{\eta}(D_M)  \\
	& \qquad \quad + \varepsilon \omega_h   \sum_{i=D_M}^{j-1} [1-\varepsilon\omega_h]^{j-1-i} [y(i)-Q^*],
  \end{array}
\end{equation}
for $j > D_M$. Employing \eqref{Ineq-tildetheta-sigma-all-delay},  \eqref{Ineq-y-Q*-delay} and \eqref{Eq-tildeeta-sol-delay}, we get
\begin{equation} \label{Eq-tildeeta-bound-delay}
\!\!\!\!\!
\begin{array}{ll}
&	|\tilde{\eta}(j) |   \leq   |1-\varepsilon\omega_h|^{j-D_M}|\tilde{\eta}(D_M)|   \\
 &  \qquad \qquad + \varepsilon \omega_h   \sum_{i=D_M}^{j-1} |1-\varepsilon\omega_h|^{j-1-i} |y(i)-Q^*|   \\
% & \leq     \Delta_Q  |1-\varepsilon\omega_h|^{j-D_M}  \\
% & + \varepsilon \omega_h \sigma_y  |1-\varepsilon\omega_h|^{j-1} \sum_{i=D_M}^{j-1}  |1-\varepsilon\omega_h|^{-i}  \bar{\lambda}^{2i}    \\
% & \leq     \Delta_Q  |1-\varepsilon\omega_h|^{j-D_M}  \\
% & + \varepsilon \omega_h \sigma_y  |1-\varepsilon\omega_h|^{j-1} \sum_{k=0}^{j-D_M-1}    ( \frac{\bar{\lambda}^{2} }{ |1-\varepsilon\omega_h| })^{k+D_M} \\
% & \leq     \Delta_Q  |1-\varepsilon\omega_h|^{j-D_M}  \\
% & + \varepsilon \omega_h \sigma_y  |1-\varepsilon\omega_h|^{j-1} ( \frac{\bar{\lambda}^{2} }{ |1-\varepsilon\omega_h| })^{D_M} \sum_{k=0}^{j-D_M-1} ( \frac{\bar{\lambda}^{2} }{ |1-\varepsilon\omega_h| })^{k} \\
% & \leq      \Delta_Q  |1-\varepsilon\omega_h|^{j-D_M}   \\
% & + \varepsilon \omega_h \sigma_y  |1-\varepsilon\omega_h|^{j-1}  ( \frac{\bar{\lambda}^{2} }{ |1-\varepsilon\omega_h| })^{D_M} \sum_{k=0}^{j-D_M-1} ( \frac{\bar{\lambda}^{2} }{ |1-\varepsilon\omega_h| })^k   \\
% & \leq     \Delta_Q  |1-\varepsilon\omega_h|^{j-D_M}  \\
 %& + \varepsilon \omega_h \sigma_y  |1-\varepsilon\omega_h|^{j-1}  ( \frac{\bar{\lambda}^{2} }{ |1-\varepsilon\omega_h| })^{D_M} \frac{ ( \frac{\bar{\lambda}^{2} }{ |1-\varepsilon\omega_h|})^{j-D_M} } {\frac{\bar{\lambda}^{2} }{ |1-\varepsilon\omega_h|}-1}   \\
%& \leq    \Delta_Q  |1-\varepsilon\omega_h|^{j-D_M}  +  \frac{   \varepsilon \omega_h \sigma_y  } { \bar{\lambda}^{2}- |1-\varepsilon\omega_h| }  \bar{\lambda}^{2j}  \\
& \qquad  \underbrace{\leq}_{ \eqref{Ineq-THM-Un-condition-2} }  \Delta_Q  |1-\varepsilon\omega_h|^{-D_M} |\bar{\lambda}|^{2j}  +  \frac{   \varepsilon \omega_h \sigma_y  } { \bar{\lambda}^{2}-|1-\varepsilon\omega_h| }  |\bar{\lambda}|^{2j} \\
& \qquad < \sigma_{\eta}  |\bar{\lambda}|^{2j}, \quad      j \geq D_M.
\end{array}
\end{equation}
In addition, via \eqref{Eq-G-delay}, \eqref{Eq-Delta}, \eqref{Eq-Y-delay} and \eqref{Ineq-tildetheta-sigma-all-delay}, we have
\begin{equation} \label{Ineq-G-Y-bound-delay}
\begin{array}{ll}
& |G(j)| < \sqrt{\varepsilon} \Delta_{G} |\bar{\lambda}|^{j}, \quad |Y(j)| < (\sqrt{\varepsilon})^3 \Delta_{Y} |\bar{\lambda}|^{j}, \\
& |\Delta(j)| <  \Delta |\bar{\lambda}|^{j}, \quad  |\bar{\Delta}_{\sqrt{\varepsilon}}(j)| < \sqrt{\varepsilon} \bar{\Delta}_{out} |\bar{\lambda}|^{j},
\end{array}
\end{equation}
for $j \geq D_M$. Using the variation of constants formula for \eqref{Eq-dervative-z-Y-delay}, we obtain
\begin{equation} \label{Eq-z-sol-delay}
\begin{array}{ll}
& z(j) =   [I -\varepsilon k H]^{j-D_M}z(D_M) \\
& \quad +   \sum_{i=D_M}^{j-1} [I -\varepsilon k H]^{j-1-i} Y(i), \quad j > D_M.
	\end{array}
\end{equation}
Employing \eqref{Ineq-tildetheta-sigma-all-delay} and \eqref{Ineq-G-Y-bound-delay}, we get from \eqref{Eq-z-sol-delay} that
\begin{equation} \label{Ineq-z-bound-delay}
\begin{array}{ll}
	|z(j)|  & \leq |1-\varepsilon k H_m|^{j-D_M}|z(D_M)| \\
 &  \qquad  +   \sum_{i=D_M}^{j-1} |1-\varepsilon k H_m|^{j-1-i} |Y(i)|    \\
 & \underbrace{\leq}_{\eqref{Ineq-THM-Un-condition-2}}  \Big[  ( \sigma_0 +\sqrt{\varepsilon} \Delta_G |\bar{\lambda}|^{D_M}) |1-\varepsilon k H_m|^{-D_M} \\
 &  \qquad \qquad \qquad \qquad   + \frac{ (\sqrt{\varepsilon})^3 \Delta_Y  }{ |\bar{\lambda}| - |1-\varepsilon k H_m|}   \Big] |\bar{\lambda}|^{j},
 \end{array}
\end{equation}
for $j \geq D_M$. Then it follows from \eqref{Ineq-G-Y-bound-delay} and \eqref{Ineq-z-bound-delay} that
\begin{equation} \label{Ineq-tildetheta-bound-delay}
\!\!\!\!
\begin{array}{ll}
  |\tilde{\theta}(j)|    & \leq     \sqrt{\varepsilon} \Delta_G |\bar{\lambda}|^{j} \\
 & \quad +  \Big[  ( \sigma_0 +\sqrt{\varepsilon} \Delta_G |\bar{\lambda}|^{D_M}) |1-\varepsilon k H_m|^{-D_M} \\
 &  \qquad \qquad \qquad \qquad \qquad  + \frac{ (\sqrt{\varepsilon})^3 \Delta_Y  }{|\bar{\lambda}| - |1-\varepsilon k H_m| }   \Big] |\bar{\lambda}|^{j}  \\
 &   \underbrace{<}_{\eqref{Ineq-THM-Un-condition-3}} \sigma |\bar{\lambda}|^{j}, \qquad j \geq D_M.
\end{array}
\end{equation}
Also, one can easily show that inequalities \eqref{Ineq-THM-Un-condition-1}-\eqref{Ineq-THM-Un-condition-3} hold for small enough $\varepsilon^*$.

 We prove further that inequalities  \eqref{Ineq-THM-Un-condition-1}-\eqref{Ineq-THM-Un-condition-3} guarantee the bound \eqref{Ineq-tildetheta-sigma-delay}. From \eqref{Ineq-THM-Un-condition-1}-\eqref{Ineq-THM-Un-condition-3}, \eqref{Ineq-tildetheta-sigma_0-delay} the inequality $|\tilde{\theta}(j) | < \sigma |\bar{\lambda}|^j$ holds for $0 \leq j \leq D_M$. %Then $|\tilde{\theta}(j) | <  \sigma \bar{\lambda}^j$  holds also for some $t > D_M$ due to continuity of $\tilde{\theta}(j)$.
We assume by contradiction that there exists $j > 0$ such that \eqref{Ineq-tildetheta-sigma-delay} does not hold. Namely, there exists the smallest $j^* > 0$ such that $|\tilde{\theta}(j^*)| \geq \sigma |\bar{\lambda}|^{j^*}$ and $|\tilde{\theta}(j)| < \sigma |\bar{\lambda}|^j$ when $0 \leq j < j^*$.% Thus, $|\tilde{\theta}(j)| < \sigma \bar{\lambda}^j$ holds for all $T-1 \leq j \leq j^*-1$.

Thus, we find that inequality \eqref{Ineq-z-bound-delay} holds for $j=j^*$. Then, by using inequalities \eqref{Ineq-THM-Un-condition-1}-\eqref{Ineq-THM-Un-condition-3}, we have
\begin{equation} \label{Ineq-bound-proof-delay-1}
\!\!\!\!\!\!\!\!\!\!\!\!
 \begin{array}{ll}
 & |\tilde{\theta}(j^*)| \leq     \sqrt{\varepsilon} \Delta_G |\bar{\lambda}|^{j^*} \\
 & \qquad\quad +  \Big[  ( \sigma_0 +\sqrt{\varepsilon} \Delta_G |\bar{\lambda}|^{D_M}) |1-\varepsilon k H_m|^{-D_M} \\
 &  \qquad \qquad \qquad     + \frac{ (\sqrt{\varepsilon})^3 \Delta_Y  }{|\bar{\lambda}| - |1-\varepsilon k H_m| }   \Big] |\bar{\lambda}|^{j^*} < \sigma |\bar{\lambda}|^{j^*},
\end{array}
\end{equation}
which contradicts to $|\tilde{\theta}(j^*)| \geq \sigma$ and completes the proof.% of \eqref{Ineq-THM-delay}.
%\end{proof}
$\hfill \Box$

%%%%%%%%%%%%%%%%%%%%%%%%%%%%%%%%%%%%%%%%%%%%%%%%%%%%%%%%%%%%%%%%%%%%%%%%%%%%%%%%%%%%%%%%%%%%%%%%%%%%%%%%%%%%%%%%%%%%%%%%%%%%%%%%%%%%%%%%%%%%%%%%%%%%%%%%%%%%%%%%%%%%%%%%%%%%%%%%%%%%%%%%%%%%%%%%
\footnotesize
\bibliographystyle{abbrv}
\bibliography{Reference}

%%%%%%%%%%%%%%%%%%%%%%%%%%%%%%%%%%%%%%%%%%%%%%%%%%%%%%%%%%%%%%%%%%%%%%%%%%%%%%%%%%%%%%%%%%%%%%%%%%%%%%%%%%%%%%%%%%%%%%%%%%%%%%%%%%%%%%%%%%%%%%%%%%%%%%%%%%%%%%%%%%%%%%%%%%%%%%%%%%%%%%%%%%%%%%%%

\end{document}